\documentclass[12pt,twoside]{amsart}
\usepackage{geometry}
\usepackage{amssymb,amsmath,amsthm, amscd, enumerate, mathrsfs}
\usepackage{graphicx, hhline}
\usepackage[shortlabels]{enumitem}
\setlist[enumerate]{leftmargin=56pt,labelsep=
8pt,itemsep=4pt,label=\upshape{(\thethm.\arabic*)}}
\usepackage{mathtools}
\usepackage{color}
\usepackage{hyperref}
\usepackage{tikz-cd}
\usepackage{tikz}
\usepackage{tabularray}
\usepackage{float}
\usepackage{graphicx}

\title{A note on \texorpdfstring{$\mathbb{Q}$-Gorenstein}{Q-Gorenstein} surfaces}
\author{Nao Moriyama} 
\address{Department of 
Mathematics, Graduate School of Science, 
Kyoto University, Kyoto 606-8502, Japan}
\email{moriyama.nao.22s@st.kyoto-u.ac.jp}
\keywords{Surfaces, Minimal model program, Finite generation, Positive characteristic}
\subjclass[2020]{Primary 14E30; Secondary 14J17}

\newtheorem{thm}{Theorem}[section]
\newtheorem{lem}[thm]{Lemma}

\newtheorem{ques}[thm]{Question}

\theoremstyle{definition}
\newtheorem{defn}[thm]{Definition}
\newtheorem{rem}[thm]{Remark}

\newtheorem*{ack}{Acknowledgments}

\makeatletter

\@addtoreset{equation}{section}
\makeatother
\begin{document}

\begin{abstract}
We construct a normal projective $\mathbb{Q}$-Gorenstein surface over an algebraically closed field whose canonical ring is not finitely generated. Moreover, we provide a counterexample to the minimal model program for $\mathbb{Q}$-Gorenstein surfaces, which was previously unknown in positive characteristic.
\end{abstract}

\maketitle
\tableofcontents

\section{Introduction}
This paper aims primarily to extend several examples from \cite{Sak87} to the case of positive characteristic.
Throughout this paper, we work over an algebraically closed field $k$. Our primary focus is the following question, to which we answer \textit{No}.
\begin{ques}[{cf. \cite[Section 4]{Fuj17}}] \label{ques: 1}
Let $X$ be a normal projective $\mathbb{Q}$-Gorenstein surface over $k$. Is the canonical ring $R(X)$ always finitely generated?
\end{ques}
In several important cases, the answer to Question \ref{ques: 1} is \textit{Yes}.
For instance, it is known to hold if one of the following conditions holds: (i) $X$ is Gorenstein (cf.~ Remark \ref{rem: f.g. for Gorenstein}),
(ii) $X$ is $\mathbb{Q}$-factorial,
(iii) $k = \overline{\mathbb{F}}_p$,
(iv) $X$ has only rational singularities,
(v) $X$ is log canonical,
and (vi) the Kodaira dimension $\kappa(X) < 2$ (cf.~ Lemma \ref{lem: kappa=1}). 
Cases (ii)-(v) follow from \cite{Fuj12} and \cite{Tan14}; for further details, see also \cite{FT12} and \cite{Fuj21-MR}. 
We note that normal surface over $\overline{\mathbb{F}}_p$ is always $\mathbb{Q}$-factorial (cf.~ \cite[Theorem 4.5]{Tan14}).

Contrary to these positive results, we show that the finite generation does not always hold:
\begin{thm}[{cf. Section \ref{sec: f.g.}}]
\label{thm: counterexample to finite generation}
Assume that $k \neq \overline{\mathbb{F}}_p$. Then, there exists a normal projective $\mathbb{Q}$-Gorenstein surface $X$ over $k$ such that $\kappa(X) = 2$ but the canonical ring $R(X)$ is not finitely generated.
\end{thm}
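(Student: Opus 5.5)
Following Sakai's idea, I will build $X$ by contracting a curve $C$ on a smooth projective surface $Y$ so that $K_X$ is $\mathbb{Q}$-Cartier but, on a resolution, $K_X$ pulls back to a divisor whose positive part involves a degree-zero line bundle of infinite order on $C$. Since $k\neq\overline{\mathbb{F}}_p$, an elliptic curve $C$ over $k$ has a point of infinite order in $\mathrm{Pic}^0(C)$. This is the source of non-finite generation.

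\emph{Construction.} I embed $C$ as a plane cubic, or more flexibly as a curve in a suitable rational or ruled surface, and blow up points on $C$ until its strict transform on a smooth projective $Y$ satisfies $C^2<0$. I choose the blown-up points so that the line bundle $N:=\mathcal{O}_C(C)$ differs from a torsion combination of $\mathcal{O}_C(K_Y)$ by a non-torsion element of $\mathrm{Pic}^0(C)$. I also want $K_Y+C$, or a suitable boundary divisor, to be big away from $C$. By Grauert/Artin in characteristic $p$ I cannot contract arbitrary negative curves projectively, so I will use Artin's criterion: contracting $C$ together with a chain of rational curves is fine once the configuration is realized inside a projective surface with a nef and big divisor vanishing exactly on the exceptional locus. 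The alternative is to take $X$ normal with the singularity obtained by contracting an elliptic curve. Then $\omega_X$ at the singular point is $\mathbb{Q}$-Cartier exactly when $K_Y+C$ restricted to $C$, which is $K_C=\mathcal{O}_C$ by adjunction, is torsion. Hence a simple elliptic singularity is automatically Gorenstein, and I need the singularity to be non-log-canonical and non-rational but still $\mathbb{Q}$-Gorenstein. I would use $C$ elliptic with an attached rational tail, or $C$ of genus $\ge 2$ with $K_C-\alpha N$ torsion for appropriate $\alpha$. This makes $K_X$ $\mathbb{Q}$-Cartier locally, yet not $\mathbb{Q}$-factorial globally, consistent with (ii).

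\emph{Non-finite generation.} Write $\pi^*K_X=K_Y+aC$ with $a\in\mathbb{Q}$. Then $H^0(X,mK_X)=H^0(Y,\lfloor m(K_Y+aC)\rfloor)$, or the appropriate rounding for Weil divisors $mK_X$ that are non-Cartier. For $m$ not divisible by the index, $mK_X$ is only Weil, and the sections are controlled by restrictions to $C$ of bundles of the form $mK_Y+\lceil\cdot\rceil C$, which involve $N^{\otimes j}$ twisted by non-torsion classes. I will show that $R(X)$ is finitely generated if and only if some Veronese subring is, if and only if the fixed part of $|mK_X|$ stabilizes. The non-torsion class on $C$ then forces the restriction maps $H^0(Y,\cdot)\to H^0(C,\cdot)$ to be zero for infinitely many residues but not for others, so the base locus (multiplicity along $C$) of $|mK_X|$ does not become eventually periodic in $m$. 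Zariski's criterion then gives non-finite generation. Bigness, $\kappa(X)=2$, follows from bigness of $K_Y+aC$.

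\emph{Main obstacle.} The hard step is reconciling $\mathbb{Q}$-Gorenstein-ness, which forces certain restrictions to $C$ to be torsion, with the presence of a non-torsion degree-zero class governing the Weil divisors $mK_X$. This must be achieved through the discrepancy between round-downs for different residues of $m$ modulo the index. I would first try Sakai's characteristic-zero configuration verbatim. Its only characteristic-dependent inputs are Riemann–Roch and vanishing on $C$, which hold in any characteristic, plus projectivity of the contraction, which I will verify via Artin's criterion using a nef big divisor on $Y$.
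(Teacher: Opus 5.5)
Your proposal has a genuine gap: the mechanism you give for non-finite generation cannot work once $K_X$ is $\mathbb{Q}$-Cartier.

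\textbf{Why the residue mechanism fails.} If $rK_X$ is Cartier, then $R(X)$ is integral over the Veronese subring $R(X,rK_X)$. It is standard that $R(X)$ is finitely generated if and only if $R(X,rK_X)$ is. So the round-down behaviour of $mK_X$ for $m\not\equiv 0\pmod r$ cannot by itself cause non-finite generation. (There are also only finitely many residues, so nothing can happen ``for infinitely many residues''.) On the Veronese side, $rK_X$ is Cartier at $\pi(C)$, so $\pi^*(rK_X)$ is trivial on a neighbourhood of $C$. Hence the non-torsion class in $\mathrm{Pic}^0(C)$ that you want to exploit never enters. In other words, $\mathbb{Q}$-Gorensteinness at $\pi(C)$ kills exactly the Zariski-type phenomenon you are trying to produce there. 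Moreover, suppose $K_X$ is nef in the sense of Mumford, as your set-up with a single contracted configuration and $\pi^*K_X=K_Y+aC$ big suggests. Then Theorem~\ref{thm: criterion} already shows that $R(X)$ is finitely generated. So the ``main obstacle'' you name is real, and round-down effects do not resolve it.

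\textbf{The missing idea.} A $\mathbb{Q}$-Gorenstein counterexample must have $K_X$ not nef. The failure of $\mathbb{Q}$-Cartierness must appear only after contracting a $K_X$-negative curve, i.e.\ in the positive part of the Zariski decomposition of $K_X$. This is exactly the paper's route:
\begin{enumerate}
\item In Section~\ref{sec: MMP}, $\tilde Y$ is $\mathbb{Q}$-Gorenstein because the choice $x-x'\sim_{\mathbb{Q}}7\mathfrak{e}$ cancels the non-torsion class at its singular point.
\item Contracting the $K_{\tilde Y}$-negative curve $E_5$, which passes through that point, gives $Y$ with $\pi^*K_Y\sim\varphi^*p^*\mathfrak{e}$. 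This is nef, but $K_Y$ is not $\mathbb{Q}$-Cartier.
\item A cyclic cover $g\colon Z\to Y$ branched along a general $A\in|mH|$ gives $K_Z\sim g^*(K_Y+\frac{m-1}{m}A)$, nef and big, while $Z$ is still not $\mathbb{Q}$-Gorenstein. So $R(Z)$ is not finitely generated by Theorem~\ref{thm: criterion}.
\item Finally, $\tilde Z=\tilde Y\times_Y Z$ is $\mathbb{Q}$-Gorenstein, and $R(\tilde Z)=R(Z)$ because the curves contracted by $\tilde Z\to Z$ are $K$-negative.
\end{enumerate}

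\textbf{Further gaps in your sketch.} First, you never actually produce $\kappa=2$; the paper obtains it from the cyclic cover. Second, a nef and big divisor whose null locus is exactly the exceptional set does not yield a projective contraction. One needs semi-ampleness, which fails in Zariski's example, and Artin's projective contractibility criterion covers only rational configurations unless $k=\overline{\mathbb{F}}_p$. The paper gets the needed semi-ampleness from the non-klt basepoint-free Theorem~\ref{thm: basepoint-free}.
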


In addition to the finite generation, we discuss the Minimal Model Program (MMP).
It is known that the cone and contraction theorem holds for $\mathbb{Q}$-Gorenstein surfaces (see, for example, \cite[Theorems 5.5 and 5.6]{Fuj21-MR}).

\begin{ques} \label{ques: 2}
Does the MMP always work for normal projective $\mathbb{Q}$-Gorenstein surfaces over $k$?
\end{ques}
As with Question \ref{ques: 1}, the answer is known to be \textit{Yes} if $X$ satisfies any of the conditions (i)--(v) mentioned above. Case (i) follows from Lemma \ref{lem: MMP for Gorenstein} and other cases are similarly established by \cite{Fuj12}, \cite{Tan14}, and related works.

Our second main result provides a counterexample to the preservation of the $\mathbb{Q}$-Gorensteinness under the MMP:
\begin{thm}[cf. Section \ref{sec: MMP}] 
\label{thm: counterexample to MMP}
Assume that $k \neq \overline{\mathbb{F}}_p$. Then, there exists a normal projective $\mathbb{Q}$-Gorenstein surface $\tilde{Y}$ over $k$ and a contraction morphism $\pi \colon \tilde{Y} \to Y$ induced by a $K_{\tilde{Y}}$-negative extremal ray such that $Y$ is no longer $\mathbb{Q}$-Gorenstein.
\end{thm}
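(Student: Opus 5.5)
We construct $\tilde{Y}$ by the same mechanism as in Sakai's examples. The key is that whether a normal surface singularity is $\mathbb{Q}$-Gorenstein depends on the analytic (moduli) data of the exceptional configuration, through a torsion condition in the Picard group of the exceptional divisor. Over a field $k\neq\overline{\mathbb{F}}_p$ we can choose a point on an elliptic curve that is non-torsion, and this is exactly what the construction uses.

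The plan has four steps.

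\textbf{Step 1: a two-stage contraction.} We want a smooth projective surface $S$ containing two disjoint configurations.
\begin{enumerate}
\item A configuration $C_0$ that contracts to a $\mathbb{Q}$-Gorenstein (even rational) singularity.
\item A configuration $E\cup F$ with $F$ a $(-1)$-curve meeting $E$. Here $E$ is an elliptic curve $E$ with negative self-intersection, possibly together with rational tails. The union $E\cup F$ is negative definite and contracts to a non-rational singularity whose $\mathbb{Q}$-Gorenstein property is governed by the line bundle $\mathcal{O}_{E}(K_S+Z)|_E$ for the canonical cycle $Z$.
\end{enumerate}

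\textbf{Step 2: building $S$.} Start from $\mathbb{P}^2$ with a smooth cubic $E_0$ and blow up points $p_1,\dots,p_n$ on $E_0$. The strict transform $E$ then has $E^2=9-n$, and $\mathcal{O}_E(-E)\cong\mathcal{O}_E(\sum p_i-3H)$. We choose the points so that $\mathcal{O}_E(-E)$, or the relevant twist, is non-torsion in $\mathrm{Pic}^0(E)$. This is possible precisely because $k\neq\overline{\mathbb{F}}_p$: over $\overline{\mathbb{F}}_p$ every point of $\mathrm{Pic}^0(E)$ is torsion, and this matches the hypothesis of the theorem.

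\textbf{Step 3: the two surfaces and the contraction.}
\begin{itemize}
\item Let $\tilde{Y}$ be obtained from $S$ by contracting only $E$, or $E$ together with a chain. One blown-up point is blown up once more so that a $(-1)$-curve $F$ meets $E$ transversally at one point.
\item Let $Y$ be obtained by contracting $E\cup F$.
\item Choose the data so that on $\tilde{Y}$ the cycle condition is torsion. Concretely, near the contracted $E$, $K_S+E$ restricted to $E$ is $\mathcal{O}_E(K_S+E)|_E=\omega_E=\mathcal{O}_E$, so $K_{\tilde Y}$ is Cartier there and $\tilde{Y}$ is Gorenstein. On $Y$ the corresponding condition involves $\mathcal{O}_E(F)=\mathcal{O}_E(q)$ twisted by the non-torsion class, so $Y$ is not $\mathbb{Q}$-Gorenstein.
\item Verify the $\mathbb{Q}$-Gorenstein criterion for elliptic singularities. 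A singularity with exceptional divisor $\sum E_i$ is $\mathbb{Q}$-Gorenstein iff there is a $\mathbb{Q}$-divisor $\Delta$ supported on it with $K+\Delta$ numerically trivial on each $E_i$ and $\mathcal{O}(m(K+\Delta))|_{\mathrm{Exc}}$ trivial for some $m$. The whole point is to reduce this to a torsion statement in $\mathrm{Pic}^0(E)$.
\end{itemize}

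\textbf{Step 4: extremality.} The image $\ell$ of $F$ in $\tilde{Y}$ has $\ell^2<0$. We also need $K_{\tilde Y}\cdot\ell<0$. Since $K_{\tilde Y}=\pi_S^*$-compatible with $K_S+E$, this gives $K_{\tilde Y}\cdot \ell=(K_S+E)\cdot F=-1+1=0$, which is not negative. So the construction must be adjusted, for instance by contracting $E$ together with a chain so that the discrepancy coefficient $a$ of $E$ is less than $1$, or by letting $F$ meet a rational tail instead. Negative definiteness of the intersection matrix then gives contractibility of $\ell$ in the category of algebraic spaces. Projectivity of $Y$ follows from the cone and contraction theorem for $\mathbb{Q}$-Gorenstein surfaces (\cite[Theorems 5.5, 5.6]{Fuj21-MR}), provided we arrange that $\tilde Y$ has Picard number making $\ell$ span an extremal ray, e.g.\ via a projective ample divisor argument.

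The main obstacle is tuning the configuration so that three conditions hold simultaneously: $K_{\tilde Y}\cdot\ell<0$, the singularity on $\tilde{Y}$ is $\mathbb{Q}$-Gorenstein, and the singularity on $Y$ is not. Since $\ell$ passing through the singular point of $\tilde Y$ typically raises $K\cdot\ell$, we need a singularity with small log discrepancy. I would first try making $E$ attached to a long chain of $(-2)$-curves, computing coefficients explicitly, and then check the torsion condition via the restriction sequence $0\to\mathrm{Pic}^0$-part$\to\mathrm{Pic}(Z)$.
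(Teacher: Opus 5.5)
Your proposal never produces a configuration with the three required properties, and the one you write down fails for more reasons than the one you catch in Step 4.

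\textbf{The singularities in your candidate.} If $F$ is a $(-1)$-curve meeting only the elliptic curve $E$, then contracting $E\cup F$ is the same as blowing down $F$ and then contracting the smooth elliptic curve $E'$, where $E'^2=E^2+1<0$. So $Y$ also has a simple elliptic singularity and is Gorenstein. Concretely, solving $K_S=\pi^*K_Y+aE+bF$ gives $a=-1$, $b=0$, so neither $N_E$ nor $\mathcal{O}_E(q)$ ever enters, contrary to your Step 3.

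This is structural, not accidental. If $\tilde Y$ is numerically log canonical along $\ell$ and $K_{\tilde Y}\cdot\ell<0$, the negativity lemma makes $Y$ numerically log canonical, hence $\mathbb{Q}$-Gorenstein. So $\ell$ must pass through a non-lc singularity of $\tilde Y$ that is nonetheless $\mathbb{Q}$-Gorenstein. The adjunction argument $\omega_E\cong\mathcal{O}_E$ can never give you such a singularity.

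\textbf{Your proposed fixes.} The first fix (coefficient of $E$ less than $1$) is impossible. Write $\pi^*K=K+c_EE+\sum_i c_iE_i$ on the minimal resolution. Then $K\cdot E=-E^2$ gives $(c_E-1)E^2=-\sum_i c_i\,E_i\cdot E\le 0$, so $c_E\ge 1$, with strict inequality as soon as $E$ meets another exceptional curve. Hence the strict transform of $\ell$ must meet a rational tail with coefficient $<1$. But then $\mathbb{Q}$-Gorensteinness of $\tilde Y$ becomes a genuine torsion condition on a combination of $(c_E-1)N_E$ and the attaching points.

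\textbf{The missing idea.} You need two \emph{different} torsion conditions, one satisfied (for $\tilde Y$) and one violated (for $Y$). The paper achieves this on $S=\mathbb{P}_C(\mathcal{O}_C\oplus\mathcal{O}_C(\mathfrak{e}))$, where the section $B$ has $N_{B/S}\cong\mathcal{O}_C(\mathfrak{e})$ and a disjoint twin section $B'$.
\begin{itemize}
\item Two blow-ups over $F$ give $\pi^*K_Y=K_X+2B+E_1$ and the condition ``$\mathfrak{e}$ is torsion'', which fails.
\item Three more blow-ups over $F'$ give the chain $E_1,B,E_3,E_4$ of $(-2)$-curves (with $B$ elliptic), with coefficients $\frac65,\frac{12}5,\frac85,\frac45$. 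The condition becomes ``$7\mathfrak{e}-(x-x')$ is torsion'', which is arranged by the choice of $F'$.
\item The $(-1)$-curve $E_5$ meets $E_4$, so $K_{\tilde Y}\cdot E_5=-1+\frac45<0$.
\end{itemize}
Sufficiency is proved there by an explicit linear equivalence $5(K_{\tilde X}+\Delta)\sim\tilde\varphi^*p^*(7\mathfrak{e}-\mathfrak{a})$ near the exceptional locus. Your criterion ``torsion on the reduced exceptional curve'' is not enough by itself for non-rational singularities, because the restriction from $\mathrm{Pic}$ of a thickening to $\mathrm{Pic}$ of the reduced curve has unipotent kernel.

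\textbf{Projectivity and extremality.} This is the second gap. Artin's criterion only gives algebraic spaces. For $k\neq\overline{\mathbb{F}}_p$, the contraction of a configuration containing an elliptic curve need not be projective. In your own model, suppose the ten blown-up points satisfy no nontrivial relation $\sum b_ip_i\sim a\,H|_E$. Then $E^2=-1$ and its contraction is not projective: the pull-back of an ample divisor would restrict trivially to $E$ and hence be $0$. So the non-torsion choices you want work directly against projectivity.

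You need $\tilde Y$ projective before the cone and contraction theorem can be applied at all, and extremality of $[\ell]$ is left open (``provided we arrange''). The paper settles both with Theorem~\ref{thm: basepoint-free}. The divisors $3B'+F$ on $X$ and $6B'+2F+F'$ on $\tilde X$ are nef and big and vanish exactly on the curves to be contracted. They are trivial on the non-klt loci of $(X,4B+2E_1)$ and $(\tilde X,6B+3E_1+4E_3+2E_4)$, because $B'$, $F$, $F'$ are disjoint from those loci. Hence $Y$ and $\tilde Y$ are projective. Finally, $[E_5]$ spans an extremal ray because $\tilde X\to Y$ factors through $\tilde Y$, and $\tilde Y\to Y$ contracts only $E_5$.
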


Examples similar to those in Theorems \ref{thm: counterexample to finite generation} and \ref{thm: counterexample to MMP} are known when $k=\mathbb{C}$ (cf.~ \cite{Sak87}). 
However, their constructions rely on analytic methods, such as the Grauert--Sakai contraction criterion.
In this paper, we provide a purely algebraic construction of these examples. This approach allows us to extend such counterexamples to positive characteristic (excluding $k = \overline{\mathbb{F}}_p$).

Here, we review the status of the finite generation and the MMP for normal projective surfaces over $k$.
\begin{itemize}
    \item Case $k = \overline{\mathbb{F}}_p$: The finite generation and the MMP always hold.
    \item Case $k \neq \overline{\mathbb{F}}_p$ (including $\mathrm{char}(k) = 0$): The situation depends on the types of singularities as follows:
    \begin{table}[ht]
\centering
\label{tab:comparison}
\begin{tabular}{|l|c c|}
\hline
\textbf{Condition} &  \multicolumn{2}{|c|}{Finite generation and MMP}\\
\hline
Gorenstein &
\quad \checkmark \quad & Lemma \ref{lem: MMP for Gorenstein} and Remark \ref{rem: f.g. for Gorenstein}\\ 
{$\mathbb{Q}$-factorial, log canonical} & \quad \checkmark \quad  & \cite{Fuj12} and \cite{Tan14}\\ 
$\mathbb{Q}$-Gorenstein & \quad $\times$ \quad & Theorems \ref{thm: counterexample to finite generation} and \ref{thm: counterexample to MMP} \\ 
\hline
\end{tabular}
\end{table}
\end{itemize}
It should be noted that Gorenstein, $\mathbb{Q}$-factorial, and log canonical properties are all strictly stronger than $\mathbb{Q}$-Gorenstein property. In the framework of the MMP, it is natural to consider a class of varieties where $K_X$ is $\mathbb{Q}$-Cartier and this property is preserved under contractions. The $\mathbb{Q}$-Gorenstein class is the largest such category that comes to mind; however, our results show that, unfortunately, the MMP does not necessarily work within this class.

For further results on the minimal model theory for log surfaces, we refer to \cite{Tan18}, \cite{Tan20}, \cite{Fuj21} or \cite{Fuj21-MR}.

Furthermore, in Section \ref{sec: criterion}, we discuss a criterion for the finite generation of $R(X)$ for minimal surfaces, which does not seem to have been explicitly noted in positive characteristic.

\begin{ack}
The author would like to thank Professor Osamu Fujino for various suggestions, constant support, and warm encouragement. The author is also grateful to Professor Kenta Hashizume for explaining the key arguments noted in the text and valuable comments on the earlier draft. The author also would like to thank Professor Makoto Enokizono for helpful discussions. The author is indebted to Hiromu Tanaka for sharing his expertise on related topics.
\end{ack}

\section{Preliminaries}
\begin{defn}[Gorenstein, $\mathbb{Q}$-Gorenstein, or $\mathbb{Q}$-factorial surfaces]
Let $X$ be a normal projective surface.
We say that $X$ is \textit{Gorenstein} (resp.~ \textit{$\mathbb{Q}$-Gorenstein}) if the canonical divisor $K_X$ is Cartier (resp.~ $\mathbb{Q}$-Cartier).
We say that $X$ is \textit{$\mathbb{Q}$-factorial} if every prime Weil divisor on $X$ is $\mathbb{Q}$-Cartier.
\end{defn}
\begin{rem}[Numerical properties of Weil divisors in the sense of Mumford]
Following \cite{Sak84}, we briefly recall that for a Weil divisor on a normal surface $X$, its pull-back to a resolution of singularities is well-defined. This construction allows us to define the intersection number of Weil divisors on $X$. Consequently, numerical properties such as being nef can be extended to Weil divisors; to avoid confusion with the standard definitions for Cartier divisors, these are sometimes referred to as being \textit{nef in the sense of Mumford.} Since the projection formula remains valid in this generalized setting (cf.~ \cite[Theorem (2.1)]{Sak84}), the study of the section ring $R(X, D)$ for a Weil divisor $D$ can be reduced to that of $R(Y, D_Y)$ by taking a log resolution $f\colon(Y, D_Y) \to (X, D)$.
\end{rem}
\begin{defn}[{Canonical rings and Kodaira dimension (cf.~ \cite{Sak87})}]
Let $X$ be a normal projective surface.
The \textit{canonical ring} $R(X)$ of $X$ is defined as
$$
R(X):=\bigoplus_{m\geq0}H^0(X, \mathcal{O}_X(mK_X)).
$$
We also define the \textit{Kodaira dimension} of $X$ (as a normal surface) by 
$$
\kappa(X):=\mathrm{tr.deg.}_k R(X)-1
$$ 
but $\kappa(X)=-\infty$ if $R(X)=k$ (cf.~ \cite[Definition (2.3)]{Sak84}). 
This definition is distinct from the original one by Iitaka, in which the Kodaira dimension of singular variety is defined via a smooth model (see, for example, \cite[Examples 2.1.5 and 2.1.6]{Laz04}).
\end{defn}
\begin{defn}
Let $D=\sum_ia_iD_i$ be a divisor on a variety $X$, where $D_i$'s are distinct prime divisors. The \textit{round-down} of $D$ is denoted by
$$
\llcorner D \lrcorner:=\sum_i\llcorner a_i\lrcorner D_i.
$$
\end{defn}

\section{A criterion of finite generation}\label{sec: criterion}
In this section, we extend a criterion for the finite generation, originally given for $k = \mathbb{C}$  (cf.~ \cite[Theorem 4.7]{Sak87}), to arbitrary characteristic.

First, we establish a version of the basepoint-free theorem for non-klt pairs in arbitrary characteristic. This result follows from the basepoint-free theorem for klt pairs (cf.~ \cite[Theorem 3.3]{KM98} and \cite[Theorem 3.2]{Tan15}). Such a reduction was originally suggested by Kawamata (cf.~ \cite[Lemma 3]{Kaw92}).

\begin{thm}\label{thm: basepoint-free}
Let $X$ be a projective normal surface over $k$ (in arbitrary characteristic) and let $\Delta$ be an effective $\mathbb{Q}$-divisor such that $K_X + \Delta$ is $\mathbb{Q}$-Cartier. 
Let $D$ be a nef Cartier divisor.
If $\mathrm{char}(k)>0$, we further assume that $D$ is not numerically trivial. 
Suppose that $aD - (K_X + \Delta)$ is nef and big for some $a \in \mathbb{Z}_{>0}$, and that $D|_{\mathrm{Nklt}(X, \Delta)}$ is semi-ample. Then $D$ is semi-ample.
\end{thm}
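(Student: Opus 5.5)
We reduce to the klt basepoint-free theorem (\cite[Theorem 3.3]{KM98} in characteristic zero, \cite[Theorem 3.2]{Tan15} in positive characteristic), following Kawamata's trick \cite[Lemma 3]{Kaw92}. Put $W:=\mathrm{Nklt}(X,\Delta)$. If $W=\emptyset$ there is nothing to do: the klt theorem applies directly. So assume $W\neq\emptyset$. Since $X$ is a surface, $W$ is a union of curves and isolated points.

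\textbf{Step 1: sections on $W$ lift to $X$.} By hypothesis $D|_W$ is semi-ample, so we fix $m_0$ with $m_0D|_W$ base point free. We want $H^0(X,\mathcal{O}_X(mD))\to H^0(W,\mathcal{O}_W(mD))$ to be surjective for all $m\gg 0$ divisible by $m_0$. Then for every point $x\in W$ there is a section of $mD$ on $X$ not vanishing at $x$, so $\mathrm{Bs}|mD|\cap W=\emptyset$. In characteristic zero the surjectivity follows from Kawamata--Viehweg type vanishing for the non-klt ideal (Nadel/Fujino vanishing), namely $H^1(X,\mathcal{J}(X,\Delta)\otimes\mathcal{O}_X(mD))=0$, because $mD-(K_X+\Delta)=(m-a)D+(aD-(K_X+\Delta))$ is nef and big. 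In positive characteristic vanishing can fail, so instead I would use a surface-specific argument. First take a log resolution $f\colon Y\to X$. Then use Fujita/Tanaka vanishing for big and nef divisors on surfaces: for $m\gg0$,
\[
H^1\bigl(Y,\mathcal{O}_Y(K_Y+\lceil N\rceil+mf^*D)\bigr)=0 ,
\]
which holds for $m$ large because $f^*D$ is nef and not numerically trivial (this is where that extra hypothesis enters; cf.\ Tanaka's vanishing on surfaces). Pushing forward gives the surjectivity onto $W$, up to the discrepancy between the scheme structure and the reduced structure, which semi-ampleness of $D|_W$ handles after taking multiples.

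\textbf{Step 2: reduce to the klt case.} Once $\mathrm{Bs}|mD|\cap W=\emptyset$ for suitable $m$, choose general members $G_1,\dots,G_r\in|mD|$ with no common zeros near $W$. Set
\[
\Delta':=(1-\varepsilon)\Delta+\tfrac{c}{m}\textstyle\sum G_i
\]
for small rational $\varepsilon,c>0$. Then $(X,\Delta')$ is klt near $W$, since the coefficients of $\Delta$ are scaled by $1-\varepsilon$ and the $G_i$ avoid $W$. Away from $W$ it is klt by genericity of the $G_i$. We also have
\[
a'D-(K_X+\Delta')=(a'-a-rc)D+\bigl(aD-(K_X+\Delta)\bigr)+\varepsilon\Delta ,
\]
and this must remain nef and big for suitable $a'$. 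The $\varepsilon\Delta$ term is the delicate part. I would handle it by choosing $\varepsilon$ tiny and using that $aD-(K_X+\Delta)$ is big: write it as ample plus effective, perturb, and absorb $\varepsilon\Delta$. Alternatively, avoid $\varepsilon$ altogether by replacing $\Delta$ with $\Delta-\delta\lfloor\cdot\rfloor$-type modifications on a resolution. The klt theorem then gives semi-ampleness of $D$. In characteristic $p$, Tanaka's theorem needs $D$ nef and $a'D-(K_X+\Delta')$ nef and big, and it requires $\Delta'$ to be $\mathbb{R}$-divisor compatible with $K_X+\Delta'$ being $\mathbb{Q}$-Cartier. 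The latter holds since each $G_i$ is Cartier and $K_X+\Delta$ is $\mathbb{Q}$-Cartier, although $K_X+(1-\varepsilon)\Delta$ need not be. For this reason I would instead set $\Delta'=\Delta-\varepsilon(\text{something }\mathbb{Q}\text{-Cartier})$, using a general member of a big divisor proportional to $aD-(K_X+\Delta)$ near $W$.

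\textbf{Main obstacle.} I expect the hardest step to be Step 1 in positive characteristic, namely lifting sections from the possibly nonreduced non-klt locus without Kawamata--Viehweg vanishing. I would first try Tanaka's vanishing for surfaces (valid when the twisting divisor minus $K$ is nef and big and large multiples are taken), together with Serre-type vanishing as $m\to\infty$ using that $D$ is not numerically trivial.
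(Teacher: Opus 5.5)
Your Step 1 is the paper's argument. You use the vanishing $H^1(X,\mathcal{J}_\Delta\otimes\mathcal{O}_X(mD))=0$ for $m\gg 0$: Nadel/Fujino vanishing in characteristic zero, and in characteristic $p$ Tanaka's vanishing after twisting by large multiples of the nef, non-numerically-trivial $D$ (cf.\ \cite[Theorem 2.9]{Tan15}). Together with semi-ampleness of $D|_W$, this gives $\mathrm{Bs}|mD|\cap W=\emptyset$.

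The genuine gap is in Step 2. The pair $(X,(1-\varepsilon)\Delta+\frac{c}{m}\sum G_i)$ is in general \emph{not} klt near $W$. Scaling by $1-\varepsilon$ only cures borderline, log canonical non-klt-ness. It does nothing for coefficients well above $1$, and nothing at points where $X$ itself is not log terminal.

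The paper's own application (Lemma \ref{lem: B and E_1}) already defeats this step. There $X$ is smooth, $D=3B'+F$ and $\Delta=4B+2E_1$, so $(1-\varepsilon)\Delta$ still has coefficients close to $4$ and $2$. Worse, if $W$ contains a simple elliptic or cusp singularity of $X$, then no effective boundary at all is klt there. Your fallback $\Delta-\varepsilon(\cdots)$ fails for the same reason.

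Shrinking the boundary also pushes in the wrong direction for nefness. The standard perturbations let you add to the boundary only small effective divisors, or divisors $\mathbb{Q}$-linearly equivalent to multiples of $D$. So the klt basepoint-free theorem cannot be invoked for a global pair here.

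What you are missing is that no globally klt pair is needed. The X-method only needs control near the base locus, and Step 1 already puts $\mathrm{Bs}|mD|$ inside the klt locus $X\setminus W$. This is how the paper finishes, deferring to the argument of \cite[Theorem 3.2]{Tan15}:

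\begin{enumerate}
\item[(i)] After non-vanishing, suppose $\mathrm{Bs}|mD|\neq\emptyset$.
\item[(ii)] Add $c\sum G_i$ with $G_i\in|mD|$ to $\Delta$ itself, not to a shrunken $\Delta$. Choose $c$, after tie-breaking on a log resolution, so that the non-klt locus becomes $W\sqcup Z$ with $Z\subset\mathrm{Bs}|mD|$.
\item[(iii)] Since $a'D-(K_X+\Delta+c\sum G_i)\sim_{\mathbb{Q}}(a'-a-crm)D+(aD-(K_X+\Delta))$ is nef and big for $a'\gg 0$, the same twisted vanishing applies to the new pair.
\item[(iv)] Hence sections of $mD$ lift from $W\sqcup Z$. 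A section not vanishing identically on $Z$ then contradicts $Z\subset\mathrm{Bs}|mD|$.
\end{enumerate}

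Replace your Step 2 by this argument.
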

The following argument is a slight modification of \cite[Theorem 3.2]{Tan15}.
\begin{proof}
Consider the following exact sequence:
\begin{align*}
0\rightarrow \mathcal{J}_\Delta\otimes\mathcal{O}_X(mD)
\rightarrow \mathcal{O}_X(mD)
\rightarrow \mathcal{O}_{\mathrm{Nklt}(X, \Delta)}(mD)
\rightarrow 0,
\end{align*}
where $\mathcal{J}_\Delta$ denotes the multiplier ideal sheaf of $(X, \Delta)$.
The $H^1$ of the first term vanishes for all $m\gg 0$.
This vanishing follows from \cite[Theorem 2.9]{Tan15} in positive characteristic. In characteristic zero, it follows from the case $k=\mathbb{C}$ (\cite[Theorem 3.4.2]{Fuj17}) via the Lefschetz principle, as argued in the proof of \cite[Theorem 2.4]{Fuj21-MR}.
Since $D|_{\mathrm{Nklt}(X, \Delta)}$ is semi-ample, this implies that
$$
\mathrm{Bs}(mD)\cap\mathrm{Nklt}(X, \Delta)=\emptyset
$$
for all $m\gg 0$.
The subsequent argument is identical to that in \cite[Theorem 3.2]{Tan15}; hence, it is omitted here. We note that, while \cite{Tan15} only treats the case of positive characteristic, the argument remains valid in characteristic zero.
This is because the required vanishing theorem is well-known over $\mathbb{C}$, and it can be extended to general characteristic zero via the Lefschetz principle.
\end{proof}

The following result is well-known. For the original treatment in arbitrary characteristic, see \cite{Zar62}. Modern references include \cite[Remark 2.3]{Wil81} and \cite[Theorem 2.3.15]{Laz04} for the characteristic zero case. It follows from Fujita's vanishing theorem, which holds in arbitrary characteristic \cite[Section 3.8]{Fuj17}. Although the proof in \cite{Laz04} assumes $k = \mathbb{C}$, the same argument works for any algebraically closed field.
\begin{thm}\label{thm: nef and big}
Let $D$ be a nef and big Cartier divisor on a normal projective variety $X$. Then the section ring $R(X, D)$ is finitely generated if and only if $D$ is semi-ample.
\end{thm}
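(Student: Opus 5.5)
The plan is to prove the two implications separately. The direction ``semi-ample $\Rightarrow$ finitely generated'' is elementary. The converse is where Fujita's vanishing theorem enters, through the Zariski--Wilson argument (cf.~\cite[Theorem 2.3.15]{Laz04}).

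\emph{Semi-ample implies finitely generated.} Suppose $mD$ is basepoint free. Let $\phi\colon X\to Z$ be the Stein factorization of the morphism given by $|mD|$. Then $mD=\phi^*A$ for an ample $A$ on $Z$, and $\phi_*\mathcal{O}_X=\mathcal{O}_Z$. The Veronese subring $R(X,mD)$ equals $R(Z,A)$, which is finitely generated. For each $0\le r<m$, the module $\bigoplus_j H^0(X,\mathcal{O}_X(rD+jmD))$ equals $\bigoplus_j H^0(Z,\phi_*\mathcal{O}_X(rD)\otimes\mathcal{O}_Z(jA))$. Since $\phi_*\mathcal{O}_X(rD)$ is coherent, this is a finitely generated $R(Z,A)$-module by Serre's theorem. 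Hence $R(X,D)$ is a finite module over a finitely generated algebra, so it is finitely generated. Bigness plays no role in this direction.

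\emph{Finitely generated implies semi-ample.} Assume $R(X,D)$ is finitely generated. Then there is $p>0$ with $R(X,pD)$ generated in degree one, so $H^0(\mathcal{O}_X(pD))^{\otimes j}\to H^0(\mathcal{O}_X(jpD))$ is surjective for all $j$. Consequently $\mathrm{Bs}|jpD|=\mathrm{Bs}|pD|=:B$ for every $j$. Write $\mathfrak{b}_j$ for the base ideal of $|jpD|$; surjectivity gives $\mathfrak{b}_j=\mathfrak{b}_1^{\,j}$. Assume, for contradiction, that $B\neq\emptyset$.

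Next I will use Fujita vanishing to produce sections that do not vanish at a point of $B$. Fix a very ample $H$. By Fujita's vanishing theorem \cite[Section 3.8]{Fuj17}, there is $m_0$ with $H^i(X,\mathcal{O}_X(mH+N))=0$ for all $i>0$, $m\ge m_0$ and every nef $N$. Hence, by Castelnuovo--Mumford regularity, $\mathcal{O}_X(K\,H + N)$ is globally generated for suitable fixed $K$ and all nef Cartier $N$. Since $D$ is big, Kodaira's lemma gives $qD=H'+E$ with $H'$ ample and $E\ge0$; we may take $H'$ to be a multiple of $H$ large enough. Then $jpD+qD = (jpD + H') + E$, and $jpD+H'$ is globally generated because $jpD$ is nef. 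Thus
\[
\mathrm{Bs}|(jp+q)D|\subseteq \mathrm{Supp}\,E
\]
for all $j$, and the vanishing order of $|(jp+q)D|$ along $B$ is bounded independently of $j$.

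To finish, compare with generation in degree one. Let $x\in B$, and suppose some section of $qD$ vanishes at $x$ only to order $c$ (possibly $c=0$ if $x\notin\mathrm{Supp}\,E$). Since $\mathrm{ord}_x\mathfrak{b}_1\ge1$, sections of $(jp+q)D$ arising from degree-$jp$ products vanish to order at least $j$ at $x$. Finite generation also implies that $R(X,D)$ is a finite module over $R(X,pD)$. So $H^0((jp+q)D)$ is spanned by products of a section from a fixed finite set with $H^0((j-j_1)pD)$, with $j_1$ bounded. Every such section therefore vanishes at $x$ to order at least $j-j_1\to\infty$. This contradicts the bounded vanishing order obtained from $jpD+H'$ (choose $E$ and $x$ appropriately, or take $x$ a general point of a component of $B$, which Kodaira's lemma allows us to keep out of $\mathrm{Supp}\,E$ after perturbation). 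Hence $B=\emptyset$, and $D$ is semi-ample.

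The main obstacle is this last step: controlling multiplicities uniformly. One must ensure that the ``fixed'' contribution from $E$ does not swallow all of $B$. I would first try to handle this by measuring orders along a divisorial valuation centered in $B$. If $\mathrm{ord}_x E$ is bounded by a constant $c$, the inequality $j-j_1\le c$ for all $j$ is already contradictory, so no choice of $x$ outside $E$ is actually needed. This follows \cite[Remark 2.3]{Wil81}.
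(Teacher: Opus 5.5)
Your proof is correct and is essentially the argument the paper relies on. The paper gives no proof of its own; it defers to the Zariski--Wilson theorem as proved in \cite[Theorem 2.3.15]{Laz04}, which runs exactly as yours: generation in degree one forces $\mathfrak{b}_j=\mathfrak{b}_1^{\,j}$, while Kodaira's lemma together with Fujita vanishing and Castelnuovo--Mumford regularity (all characteristic-free) forces every base ideal of $|(jp+q)D|$ to contain the fixed ideal $\mathcal{O}_X(-E)$.

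The only blemish is your parenthetical about moving $x\in B$ off $\mathrm{Supp}\,E$. That can never be done, since one always has $B\subseteq\mathrm{Supp}\,E$. It is also unnecessary: as your last paragraph says, the bound $j-j_1\le\mathrm{ord}_xE$ already yields the contradiction.
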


We now provide a criterion that was originally shown in \cite{Sak87} for $k =\mathbb{C}$. Since the proof of \cite[Theorem 4.7]{Sak87} remains valid in any characteristic, the same result holds in the positive characteristic setting. For the reader's convenience, we include the proof below.

\begin{thm}[{cf. \cite[Theorem 4.7]{Sak87}}]\label{thm: criterion}
Let $k$ be an algebraically closed field of any characteristic.
Let $X$ be a normal projective surface over $k$ such that $\kappa(X)=2$ and $K_X$ is nef in the sense of Mumford. Then, $R(X)$ is finitely generated if and only if $X$ is $\mathbb{Q}$-Gorenstein.  
\end{thm}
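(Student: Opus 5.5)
We prove the two implications separately, using Theorem \ref{thm: nef and big} in one direction and the basepoint-free Theorem \ref{thm: basepoint-free} in the other.

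\emph{Finite generation implies $\mathbb{Q}$-Gorenstein.} Suppose $R(X)$ is finitely generated. Then some Veronese subring $R(X)^{(m)}=\bigoplus_{n}H^0(X,\mathcal{O}_X(nmK_X))$ is generated in degree one. Since $\kappa(X)=2$, the sections of $mK_X$ define a birational morphism away from the fixed part. Write $|mK_X| = |M| + F$ with $F$ the fixed part. Generation in degree one gives $\mathrm{Fix}|nmK_X| = nF$ and $\mathrm{Bs}|nM|=\mathrm{Bs}|M|$.

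We first show that $F$ is negligible numerically. Pass to a resolution $f\colon Y\to X$ and use Mumford pull-backs, as recalled in the remark on numerical properties of Weil divisors. Since $K_X$ is nef in the sense of Mumford, it has a Zariski decomposition with no negative part. Finite generation then forces the mobile part of $f^*(mK_X)$ to become base point free after a further blow-up, and its self-intersection equals $m^2K_X^2$ asymptotically. Hence $F$ is supported on curves $C$ with $K_X\cdot C=0$.

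Let $\phi\colon X\dashrightarrow Z=\mathrm{Proj}\,R(X)$ be the induced map. It is in fact a morphism, since the base loci lie over these $K_X$-trivial curves, which are contracted. Then $mK_X=\phi^*\mathcal{O}_Z(1)$ as Weil divisors, because the divisorial part of $mK_X$ is recovered from sections. This pull-back of an ample Cartier divisor is Cartier, so $K_X$ is $\mathbb{Q}$-Cartier. A cleaner route is the following: $R(X)$ finitely generated implies $\mathcal{O}_X(mK_X)$ is generated by global sections near each point for suitable $m$. A reflexive rank one sheaf generated by global sections whose generic local sections have no common zero is invertible.

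\emph{$\mathbb{Q}$-Gorenstein implies finite generation.} Suppose $K_X$ is $\mathbb{Q}$-Cartier, and take $m$ with $D:=mK_X$ Cartier. Then $D$ is nef, and it is big because $\kappa(X)=2$. By Theorem \ref{thm: nef and big}, it suffices to show $D$ is semi-ample. We apply Theorem \ref{thm: basepoint-free} with $\Delta=0$ and $a=2$: the divisor $2D-K_X=(2m-1)K_X$ is nef and big, and $D$ is not numerically trivial.

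It remains to check that $D|_{\mathrm{Nklt}(X)}$ is semi-ample. The non-klt locus of $X$ is a finite set of points, because $X$ is a normal surface and hence has isolated singularities. The restriction of a line bundle to a zero-dimensional scheme is trivial, hence semi-ample. Therefore $D$ is semi-ample, and $R(X,D)$ is finitely generated. Since $R(X)$ is a finite module over the Veronese subring $R(X,D)=R(X)^{(m)}$, it is finitely generated as well.

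\emph{Main obstacle.} The difficult step is the first implication, showing that the generated-in-degree-one property makes $\mathcal{O}_X(mK_X)$ invertible at each singular point. We handle this by proving that some section of $\mathcal{O}_X(nmK_X)$ generates the stalk at each point $x$. Let $\mathcal{O}_Z(1)$ be very ample on $Z=\mathrm{Proj}\,R(X)$. First we show $\phi$ is a morphism: $K_X$ is Mumford-nef, and $\mathrm{Bs}$ is contained in the exceptional locus of the contraction of $K_X$-trivial curves. Then $\phi^*\mathcal{O}_Z(1)\subset\mathcal{O}_X(mK_X)$, with equality because both sheaves are reflexive and they agree in codimension one, using the fact that $R(X)^{(m)}\cong\bigoplus H^0(\mathcal{O}_Z(n))$.
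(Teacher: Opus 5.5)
Your \emph{if} direction is correct, and a little more direct than the paper's. You apply Theorem \ref{thm: basepoint-free} on $X$ itself with $\Delta=0$, using that $\mathrm{Nklt}(X,0)\subset\mathrm{Sing}(X)$ is zero-dimensional. The paper instead pulls back to the minimal resolution and uses that $\pi^*K_X$ is trivial on the exceptional locus containing $\mathrm{Nklt}(Y,\Delta)$. Theorem \ref{thm: nef and big} is not needed there, since semi-ampleness already gives finite generation.

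The \emph{only if} direction has a genuine gap, exactly at the step you call the main obstacle. You never prove that $|nmK_X|$ has no fixed part and no base points, i.e.\ that $X\dashrightarrow\mathrm{Proj}\,R(X)$ is a morphism with $mK_X=\phi^*\mathcal{O}_Z(1)$.
\begin{itemize}
\item Your volume argument can be made rigorous, but it only gives $f^*K_X\cdot F_Y=0$ for $F_Y:=f^*(mK_X)-M_Y$. That does not rule out a nonzero fixed part, nor base points over singular points of $X$.
\item ``The base loci lie over $K_X$-trivial curves, which are contracted'' is not a reason for a rational map to extend. No contraction of these curves on $X$ is known to exist at that stage.
\item The ``cleaner route'' is false as stated: a globally generated reflexive rank one sheaf need not be invertible. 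On $\mathbb{P}(1,1,2)$ the sheaf $\mathcal{O}(1)$ is generated at the vertex by $x,y$ but is not invertible there.
\item The added condition that some section does not vanish at $x$ just says $mK_X$ is linearly equivalent to a divisor missing $x$, i.e.\ Cartier at $x$. That is the conclusion, so the route is circular.
\item Likewise, ``finitely generated implies $\mathcal{O}_X(mK_X)$ globally generated'' is the whole content of the implication, not a step in it.
\end{itemize}

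The missing ingredient is Theorem \ref{thm: nef and big}, used in the direction where you did not use it. The paper passes to the minimal resolution $\pi\colon Y\to X$, where $R(X)\cong R(Y,K_Y+\Delta)$ with $K_Y+\Delta=\pi^*K_X$ nef and big. Finite generation then makes some $\mathcal{L}=\mathcal{O}_Y(m(K_Y+\Delta))$ free. Since $\mathcal{L}\cdot E_i=0$ for every exceptional curve, the induced morphism contracts them and factors through $X$. Hence $\mathcal{O}_X(mK_X)=\pi_*\mathcal{L}$ is the pull-back of an ample line bundle, so it is invertible.

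Alternatively, you can finish your own argument without that theorem, after choosing $f$ so that the mobile part $M_Y$ is free. From $M_Y^2=(f^*(mK_X))^2$ and nefness you get $M_Y\cdot F_Y=f^*(mK_X)\cdot F_Y=0$, hence $F_Y^2=0$. The Hodge index theorem, with $f^*(mK_X)$ nef and big, then forces the effective divisor $F_Y$ to be $0$. So $f^*(mK_X)=M_Y$ is free, and the same factorization through $X$ gives invertibility.
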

\begin{proof}
\textbf{The \textit{if} part.}
Let $K_X$ be $\mathbb{Q}$-Cartier.
Let $\pi \colon Y \to X$ be the minimal resolution, and let $K_Y + \Delta = \pi^* K_X$.
Then, $K_Y+\Delta$ is nef and big and $\mathrm{Supp}(\mathrm{Nklt}(Y, \Delta))$ is contained in the exceptional locus of $\pi$. We find that $(K_Y+\Delta)|_{\mathrm{Nklt}(Y, \Delta)}$ is trivial.
So, we can show that $K_Y+\Delta$ is semi-ample by Theorem \ref{thm: basepoint-free}.
Then, $R(Y, K_Y+\Delta)\cong R(X)$ is finitely generated.

\noindent
\textbf{The \textit{only if} part.}
Let $(Y, \pi, \Delta)$ be the minimal resolution of $X$, and $A = \bigcup E_i$ the exceptional set. Note that
$$
R(X) \cong R(Y, K_Y + \Delta):=\bigoplus_{m \ge 0} H^0(Y, \mathcal{O}_Y(\llcorner m(K_Y+\Delta))\lrcorner).
$$
is finitely generated.
Since $K_Y + \Delta$ is a nef and big $\mathbb{Q}$-Cartier $\mathbb{Q}$-divisor, this implies that $K_Y + \Delta$ is semi-ample by Theorem \ref{thm: nef and big}. Then, its some multiple is free.
Write $\mathcal{L} = \mathcal{O}_Y(m(K_Y + \Delta))$ for such $m$. Since $\mathcal{L}\cdot E_i=0$ for all $i$, $\mathcal{L}$ is numerically trivial near $A$. This means that $\pi_*\mathcal{L}=\mathcal{O}_X(mK_X)$ near $\pi(A)$. Since $A$ is the exceptional set of $\pi$,  we have $\pi_*\mathcal{L}=\mathcal{O}_X(mK_X)$ and $\pi$ is the morphism induced by the free line bundle $\mathcal{L}$. Hence $\mathcal{O}_X(mK_X)$  is invertible.
\end{proof}

\section{Counterexamples to the MMP} \label{sec: MMP}
In this section, we recall Sakai's example \cite[Example 4.5]{Sak87}, 
which was treated under the condition $k=\mathbb{C}$, and we will check that the construction of this example is valid in positive characteristic.
This example implies that $\mathbb{Q}$-Gorensteinness is not preserved under the MMP. In contrast, Gorensteinness and $\mathbb{Q}$-factoriality are preserved (see Lemma \ref{lem: MMP for Gorenstein} and \cite{Fuj21-MR}, respectively).

The following lemma was suggested to the author by Enokizono, Fujino, and Hashizume.
\begin{lem}[The MMP for Gorenstein surfaces]\label{lem: MMP for Gorenstein}
The MMP holds for normal projective Gorenstein surfaces in any characteristic.
More precisely, for any normal projective Gorenstein surface $X$ over $k$ (of arbitrary characteristic), there is a sequence of contractions
$$
X=X_0\stackrel{\phi_0}{\rightarrow}X_1\stackrel{\phi_1}{\rightarrow}\cdots\stackrel{\phi_{k-1}}{\rightarrow}X_k=X^*
$$
such that either $K_{X^*}$ is a nef Cartier divisor or $X^*$ is a Mori fiber space.
\end{lem}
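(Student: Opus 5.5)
The plan is to run the usual cone-and-contraction MMP for $\mathbb{Q}$-Gorenstein surfaces (\cite[Theorems 5.5 and 5.6]{Fuj21-MR}). At each step we verify that every birational contraction of a $K_X$-negative extremal ray on a Gorenstein surface again produces a normal projective Gorenstein surface, and that the process stops.

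\emph{Setup of one step.} Let $X$ be Gorenstein with $K_X$ not nef. Then $K_X$ is $\mathbb{Q}$-Cartier, so the cone theorem gives a $K_X$-negative extremal ray $R$. The contraction theorem gives $\phi\colon X\to X'$ contracting exactly the curves whose classes lie in $R$. If $\dim X'\le 1$, then $X$ is a Mori fiber space and we stop. Otherwise $\phi$ is birational and contracts a single irreducible curve $C$ with $C^2<0$ and $K_X\cdot C<0$; the contraction theorem also says $X'$ is a normal projective surface.

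\emph{Gorensteinness of $X'$.} This is the only nontrivial point. Since $K_X$ is Cartier and $K_X\cdot C<0$ is an integer, we have $K_X\cdot C\le -1$. Adjunction on the Gorenstein surface $X$ (where $\omega_C$ is computed from $\omega_X$ and $\mathcal{O}_X(C)$, with $C$ Cartier along its Gorenstein-points or via Mumford pullback) gives $2p_a(C)-2 = (K_X+C)\cdot C$. Since $C$ is contractible, $(K_X+C)\cdot C<0$, so $p_a(C)=0$ and $C\cong\mathbb{P}^1$.

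First consider the case where $C$ meets only smooth points of $X$. Then $K_X\cdot C=-1$ and $C^2=-1$, so $C$ is a $(-1)$-curve, and Castelnuovo's criterion gives $X'$ smooth at $\phi(C)$. In general $C$ may pass through Gorenstein singular points, and I would argue as follows. Set $L:=K_X-(K_X\cdot C/C^2)\,C$. This is a $\mathbb{Q}$-divisor that is $\phi$-numerically trivial, and $K_{X'}=\phi_*K_X$. To show that $K_{X'}$ is Cartier near $P=\phi(C)$, I would work on a neighbourhood $U$ of $P$ and show that $\mathcal{O}_X(K_X)|_{\phi^{-1}U}$ is isomorphic to $\phi^*$ of an invertible sheaf. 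This reduces to showing that the restriction of $\omega_X$ to $C$ is trivialized after twisting, i.e.\ that there is a Cartier divisor $K_X+nC$ with $n$ an integer and degree $0$ on $C$, which in turn requires $C^2$ to divide $K_X\cdot C$. Once that is in hand, the identity $(K_X+nC)|_C\sim 0$ on $C\cong\mathbb{P}^1$ lifts to the formal neighbourhood, because $H^1(C,\mathcal{O}_C(-jC))=0$ for $j\ge 1$ since $-C$ is $\phi$-ample. This is Artin's argument, which works in any characteristic.

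\emph{Main obstacle.} The hard part is controlling the singularities along $C$ in the general case: showing that $X'$ is Gorenstein even when $C$ passes through non-rational Gorenstein singularities, where $C$ itself need not be Cartier. My first attempt would use the minimal resolution $f\colon Y\to X$, on which $f^*K_X=K_Y+\Delta$ with $\Delta\ge 0$ integral (because $X$ is Gorenstein). I would then use the inequality $(K_Y+\Delta)\cdot\tilde C<0$ to show that $\tilde C$ is a $(-1)$-curve meeting $\Delta$ in a controlled way. After contracting $\tilde C$ and the relevant part of the exceptional locus, the resulting canonical divisor pulls back to an integral divisor, which should give Gorensteinness at $P$.

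\emph{Termination.} Each birational step drops the Picard number $\rho(X)$ by one, so the process terminates. It ends either at a Mori fiber space or at $X^*$ with $K_{X^*}$ nef and Cartier.
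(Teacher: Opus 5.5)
Your setup (cone and contraction theorems, the Mori fiber space case, termination via $\rho$) matches the paper. The Gorenstein step, however, has a genuine gap. You treat $C$ passing through non-rational Gorenstein points as the main obstacle, but the missing idea is that integrality of $K_X\cdot C$ rules this case out entirely.

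Take $f\colon Y\to X$ the minimal resolution with $f^*K_X=K_Y+\Delta$, $\Delta\ge 0$. Then $\tilde C^2\le C^2<0$, and $K_Y\cdot\tilde C\le K_X\cdot C<0$ because $\Delta\cdot\tilde C\ge 0$. So $\tilde C$ is a $(-1)$-curve and $K_X\cdot C=-1+\Delta\cdot\tilde C$. Since $K_X$ is Cartier, $K_X\cdot C$ is a negative integer, which forces $K_X\cdot C=-1$ and $\Delta\cdot\tilde C=0$.

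Now $\Delta\cdot E_j=-K_Y\cdot E_j\le 0$ for every $f$-exceptional $E_j$. Hence on each connected component of $\mathrm{Exc}(f)$ the divisor $\Delta$ is either zero or has full support. So $\tilde C$ meets only exceptional curves over Du Val points, i.e.\ $X$ is canonical along $C$. Consequently $C$ is $\mathbb{Q}$-Cartier, and $K_X=\phi^*K_{X'}+eC$ with $e=K_X\cdot C/C^2>0$. Therefore $X'$ is canonical at $\phi(C)$, hence smooth or Du Val there, hence Gorenstein. This is the paper's argument. Your sketch reaches ``$\tilde C$ is a $(-1)$-curve meeting $\Delta$ in a controlled way'' but never extracts $\Delta\cdot\tilde C=0$.

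The alternative routes you propose would not close the gap.
\begin{enumerate}
\item[(i)] Your final inference, that an integral pull-back of $K_{X'}$ ``should give Gorensteinness at $P$'', is false at non-rational points. Numerical Gorensteinness does not make $K$ Cartier: the surface $Y$ in Section~\ref{sec: MMP} has integral Mumford pull-back $\pi^*K_Y=K_X+2B+E_1$, yet $K_Y$ is not even $\mathbb{Q}$-Cartier.
\item[(ii)] The Artin-type lifting via $H^1(C,\mathcal{O}_C(-jC))=0$ presupposes that $C$ is Cartier, so that $I^j/I^{j+1}\cong\mathcal{O}_C(-jC)$ with $I$ the ideal of $C$. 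It also needs these obstruction groups to vanish, which is exactly what is in doubt near non-rational points.
\item[(iii)] The divisibility of $K_X\cdot C$ by $C^2$ is left unproved.
\item[(iv)] The formula $2p_a(C)-2=(K_X+C)\cdot C$ does not hold when $C$ fails to be Cartier at singular points of $X$; a correction term (the different) appears. Rationality should instead be read off from $\tilde C$ on $Y$.
\end{enumerate}
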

A part of the argument in the proof is parallel to the one in  \cite[Theorem 4.10.8]{Fuj17}.
\begin{proof}
We consider the following step-by-step procedure.
\begin{itemize}
\item[Step 1.] We start with $X$.
\item[Step 2.] If $K_X$ is nef, we stop.
\item[Step 3.] If $K_X$ is not nef, then there is a rational $K_X$-negative extremal curve $C$ and we have the corresponding contraction morphism $\varphi_C\colon X\rightarrow X'$ by \cite[Theorems 5.5 and 5.6]{Fuj21-MR}. 
We note that $X'$ is normal.
\item[Step 4.] If $\varphi_C\colon X\rightarrow X'$ is not birational, then it is a Mori fiber space and we stop.
\item[Step 5.] Otherwise, $\varphi_C\colon X\rightarrow X'$ is birational.
Then, we will replace $X$ with $X'$ and go back to the first procedure.
\end{itemize}
To complete the proof, it suffices to show that $X'$ is Gorenstein in the last step.
Let $\varphi_C\colon X\rightarrow X'$ be the birational contraction associated to a rational $K_X$-negative extremal curve $C$. Then we find that $C^2<0$ in the sense of Mumford.
Let $f\colon Y\rightarrow X$ be the minimal resolution. We can write $K_Y=f^*K_X+\sum_i a_iE_i$, where $E_i$'s are distinct $f$-exceptional divisors. By the negativity lemma, $a_i\leq0$ for each $a_i$. Since $0> K_X\cdot C=(K_Y-\sum_i a_iE_i)\cdot f_*^{-1}C$, it follows that $K_Y\cdot f_*^{-1}C<0$. This implies $K_Y\cdot f_*^{-1}C=-1$ and $(f_*^{-1}C)^2=-1$. Consequently, $(\sum_i a_iE_i)\cdot f_*^{-1}C=0$, meaning $X$ has canonical singularity around $C$. In particular, $C$ is a $\mathbb{Q}$-Cartier divisor on $X$ and $\mathrm{Supp(Exc}(\varphi_C))=C$. Thus, $X'$ has canonical singularities on a neighborhood of the point $\varphi_C(C)$. Then, the point $\varphi_C(C)$ is either a smooth point or a Du Val singularity; hence, $K_{X'}$ is Cartier on a neighborhood of $\varphi_C(C)$ (see, for example, \cite[Theorem 4.5]{KM98} and \cite[Theorems 3.15, 3.31 and Corollary 4.19]{Bad01}). 
Since $\varphi_C$ is an isomorphism outside $C$, $K_{X'}$ is Cartier on the whole of $X'$, making $X'$ Gorenstein. Finally, since the Picard number $\rho(X)$ decreases by exactly one at each birational step, where $\rho(X)$ denotes the dimension of the $\mathbb{R}$-vector space $N^1(X):=\{\mathrm{Pic}(X)/\equiv\}\otimes\mathbb{R}$, this procedure must terminate.
\end{proof}

In Sakai's example, the contraction is constructed using the Grauert--Sakai contraction criterion, and the projectivity of the resulting surface is shown by Brenton's projectivity criteria. However, since Brenton's criteria are results in the analytic setting, they are not applicable
when $k\neq\mathbb{C}$.

On the other hand, Sakai's example can be reconstructed in a more modern framework. 
The strategy of using the basepoint-free theorem to construct a contraction morphism via a semi-ample line bundle was suggested to the author by Hashizume. While the original suggestion was in the context of $k=\mathbb{C}$, we have here generalized and adapted the argument to work in arbitrary characteristic by verifying Theorem \ref{thm: basepoint-free}.
Alternatively, while one could apply Keel's result (cf.~ \cite[Theorem 2.2]{Tan14}) in the case of positive characteristic, we shall not do so here to provide a uniform treatment across all characteristics.

\vspace{1em} 
\noindent 
\textbf{Construction.}
Assume that either $\mathrm{char}(k) = 0$ 
or $\mathrm{char}(k) > 0$ with $k \neq \bar{\mathbb{F}}_p$. 
Let $C$ be an elliptic curve over $k$, and choose a non-torsion element 
$\mathfrak{e} \in \mathrm{Pic}^0(C)$; the existence of such an element is 
guaranteed by our assumption on $k$.
Let
$$
S := \mathbb{P}_C(\mathcal{O}_C \oplus \mathcal{O}_C(\mathfrak{e})) \xrightarrow{p} C
$$
be the structure of a geometrically ruled surface. Then there are two sections $B$ and $B'$ of $p$ such that $B' \sim B - p^*\mathfrak{e}$. Note that $K_S + B + B' \sim 0$ and $B^2 = B'^2 = B \cdot B' = 0$. Fix two distinct fibers $F$ and $F'$ of $p$. 

We first take a blow-up $X_1$ of $S$ at $B \cap F$ and let $E_1$ be the exceptional divisor.

\begin{tikzpicture}
\draw (-1, -1) node{$S$};
\draw (0.5, -1) -- (3.5, -1);
\draw (2, -0.7) node{$B$};
\draw (0.5, -3) -- (3.5, -3);
\draw (2, -3.3) node{$B'$};
\draw (1, -0.5) -- (1, -3.5);
\draw (0.7, -2) node{$F'$};
\draw (3, -0.5) -- (3, -3.5);
\draw (3.3, -2) node{$F$};
\filldraw (3, -1) circle (2pt);

\draw[->, line width=1pt] (2, -5) -- (2, -4);
\draw (2, -4.5) node[anchor=west]{blow-up at $B\cap F$};

\draw (-1, -6) node{$X_1$};
\draw (0.3, -5.5) .. controls (1, -6) and (2, -6) .. (2.7, -5.5);
\draw (1.5, -5.5) node{$B$};
\draw (2.3, -5.5) .. controls (3, -6) and (4, -6) .. (4.7, -5.5);
\draw (3.5, -5.5) node{$E_1$};

\draw (0.5, -8) -- (4.5, -8);
\draw (2.5, -8.3) node{$B'$};
\draw (1, -5.5) -- (1, -8.5);
\draw (0.7, -7) node{$F'$};
\draw (4, -5.5) -- (4, -8.5);
\draw (4.3, -7) node{$F$};
\end{tikzpicture}

The pull-back of $K_S$ is $K_{X_1}-E_1$, the pull-back of $B$ is $B+E_1$, and the pull-back of $B'$ is $B'$, where we denote the strict transform of a prime divisor $D$ simply by $D$, by abuse of notation.

We take a blow-up $X:=X_2$ of $X_1$ at $E_1\cap F$ and let $E_2$ be the exceptional divisor.

\begin{tikzpicture}
\draw (-1, -1) node{$X_1$};
\draw (0.3, -0.5) .. controls (1, -1) and (2, -1) .. (2.7, -0.5);
\draw (1.5, -0.5) node{$B$};
\draw (2.3, -0.5) .. controls (3, -1) and (4, -1) .. (4.7, -0.5);
\draw (3.5, -0.5) node{$E_1$};

\draw (0.5, -3) -- (4.5, -3);
\draw (2.5, -3.3) node{$B'$};
\draw (1, -0.5) -- (1, -3.5);
\draw (0.7, -2) node{$F'$};
\draw (4, -0.5) -- (4, -3.5);
\draw (4.3, -2) node{$F$};
\filldraw (4, -0.85) circle (2pt);

\draw[->, line width=1pt] (2, -5) -- (2, -4);
\draw (2, -4.5) node[anchor=west]{blow-up at $E_1\cap F$};

\draw (-1, -6) node{$X:=X_2$};
\draw (0.3, -5.5) .. controls (1, -6) and (2, -6) .. (2.7, -5.5);
\draw (1.5, -5.5) node{$B$};
\draw (2.3, -5.5) .. controls (3, -6) and (4, -6) .. (4.7, -5.5);
\draw (3.5, -5.5) node{$E_1$};
\draw (4.3, -5.5) .. controls (5, -6) and (6, -6) .. (6.7, -5.5);
\draw (5.5, -5.5) node{$E_2$};

\draw (0.5, -8) -- (6.5, -8);
\draw (3.5, -8.3) node{$B'$};
\draw (1, -5.5) -- (1, -8.5);
\draw (0.7, -7) node{$F'$};
\draw (6, -5.5) -- (6, -8.5);
\draw (6.3, -7) node{$F$};
\end{tikzpicture}

Then we get a projective birational morphism $\varphi: X \to S$. 
We have $\varphi^*K_S=K_X-E_1-2E_2$, $\varphi^*B=B+E_1+E_2$, $\varphi^*B'=B'$, and $\varphi^*F=F+E_1+2E_2$.
Since $K_S+B+B'\sim 0$, we have $K_X+B+B'-E_2\sim 0$.

\begin{lem} \label{lem: B and E_1}
The divisor $3B'+F$ on $X$ is semi-ample and we can take a contraction morphism $\pi\colon X\rightarrow Y$. The contraction $\pi$ contracts only the curves $B$ and $E_1$.
\end{lem}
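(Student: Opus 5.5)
The plan is to apply Theorem~\ref{thm: basepoint-free} to $D:=3B'+F$ with a suitably chosen boundary $\Delta$. Once $D$ is known to be semi-ample, we take $\pi\colon X\to Y$ to be the Stein factorization of the morphism given by $|mD|$ for $m$ sufficiently divisible. The curves contracted by $\pi$ are then exactly the curves $C$ with $D\cdot C=0$.

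\textbf{Step 1: intersection numbers.} We record the numbers needed on $X$. Since $B'$ is untouched by both blow-ups, $B'^2=0$. The strict transform $F$ passes through both centres, so $F^2=-2$; also $F\cdot B'=1$, $F\cdot E_2=1$ and $F\cdot E_1=F\cdot B=0$. Further, $B^2=-1$, $E_1^2=-2$, $E_2^2=-1$, $B\cdot E_1=E_1\cdot E_2=1$ and $B\cdot E_2=0$. Since $B'$ is disjoint from $B,E_1,E_2$, we get
$$
D\cdot B'=1,\quad D\cdot F=1,\quad D\cdot E_2=1,\quad D\cdot B=D\cdot E_1=0,\quad D^2=4.
$$
As $D$ is effective and non-negative on each of its components, $D$ is nef and big.

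\textbf{Step 2: the $D$-trivial curves.} An irreducible curve $C$ with $D\cdot C=0$ must be disjoint from $B'\cup F$. We push $C$ forward to $S$. Its image cannot be a fibre, since every fibre meets $B'$. If the image $\Gamma$ is horizontal and $\Gamma\neq B'$, then $\Gamma\cdot B'=0$ forces $\Gamma\equiv aB$. Hence either $\Gamma=B$, or $\Gamma$ is disjoint from $B$. In the second case $\Gamma$ does not pass through $B\cap F$, so its strict transform still meets $F$, which is impossible. Therefore the only $D$-trivial curves are $B$ and $E_1$, together with exceptional curves of $\varphi$ missing $F$; among $E_1,E_2$ only $E_1$ qualifies.

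\textbf{Step 3: semi-ampleness.} This is the main point, and the key is the choice of $\Delta$. Using $K_X\sim -B-B'+E_2$, we take $\Delta:=2B+B'+E_1$, so that $K_X+\Delta\sim B+E_1+E_2$ is Cartier. A direct check with Step 1 gives
$$
(aD-(K_X+\Delta))\cdot B=0,\quad \cdot E_1=0,\quad \cdot E_2=a,\quad \cdot F=a-1,\quad \cdot B'=a.
$$
For any other curve, the left-hand side is $aD\cdot C-(B+E_1+E_2)\cdot C\ge a\,D\cdot C-(B+E_1+E_2)\cdot C$. Here $(B+E_1+E_2)\cdot C\le \varphi^*B\cdot C$. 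Since $B\equiv B'+p^*\mathfrak e$, pulled back numerically to $B'$, this is bounded by $D\cdot C$. So for $a\ge 2$ the divisor $aD-(K_X+\Delta)$ is nef, and it is big since $D$ is big. The set $\mathrm{Nklt}(X,\Delta)$ is supported on $B\cup B'\cup E_1$.

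It remains to see that $D|_{\mathrm{Nklt}(X,\Delta)}$ is semi-ample. The divisor $D$ has support disjoint from $B\cup E_1$, so $\mathcal O_X(D)$ is trivial on a neighbourhood of $B\cup E_1$, and hence trivial on the non-reduced scheme structure there. The curve $B'$ is disjoint from $B\cup E_1$, and $D|_{B'}$ is a degree-one divisor on the elliptic curve $B'$, hence ample. On the possibly non-reduced structure along $B'$ (coefficient $1$, so reduced), semi-ampleness follows in the same way. Since $D\not\equiv 0$, Theorem~\ref{thm: basepoint-free} applies and $D$ is semi-ample.

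I expect Step 3 to be the main obstacle. The delicate parts are bounding $(B+E_1+E_2)\cdot C$ for arbitrary curves and confirming that the multiplier-ideal subscheme structure causes no trouble. If the nefness estimate is awkward, I would instead enlarge $a$ and use the Zariski decomposition, or replace $\Delta$ by $2B+B'+E_1+\epsilon F$. Finally, Step 2 shows that $\pi$ contracts exactly $B$ and $E_1$.
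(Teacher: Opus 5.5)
Your proof is correct. It uses the same tool as the paper, Theorem~\ref{thm: basepoint-free} applied to $D=3B'+F$, but two ingredients differ.

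\emph{Choice of boundary.} The paper takes $\Delta=4B+2E_1$ and verifies $D-(K_X+\Delta)\sim\varphi^*(B+F-4p^*\mathfrak{e})$, the pull-back of an ample divisor. Then $\mathrm{Nklt}(X,\Delta)$ lies in $B\cup E_1$, where $\mathcal{O}_X(D)$ is simply trivial. Your choice $\Delta=2B+B'+E_1$ gives $K_X+\Delta\sim\varphi^*B\equiv B'$, so the positivity check becomes purely numerical. The price is that the elliptic curve $B'$ enters the non-klt locus, and you must use $\deg D|_{B'}=1$ there, which you do correctly. Taking coefficient $1-\epsilon$ on $B'$ would avoid even that.

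\emph{The $D$-trivial curves.} The paper reads them off from $D\sim\varphi^*(B'+F)-2\varphi^*p^*\mathfrak{e}+2B+E_1$, with $B'+F$ ample on $S$. Your Step~2 instead classifies them by hand, using the numerical classes of horizontal curves on $S$. Both arguments are valid.

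One repair is needed in Step~3, where the nefness and bigness argument is muddled. The displayed inequality is a tautology, and $(B+E_1+E_2)\cdot C\le\varphi^*B\cdot C$ is actually an equality. Also, ``big since $D$ is big'' does not follow for a fixed $a$. State instead that $aD-(K_X+\Delta)\equiv(3a-1)B'+aF$. This divisor is effective with degree $a$ on $B'$ and $a-1$ on $F$, so it is nef for $a\ge 1$. Its self-intersection is $4a^2-2a>0$, so it is big.
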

\begin{proof}
We note that $B'^2=0$ and $F^2=-2$.
The divisor $3B'+F$ on $X$ is nef because $(3B'+F)\cdot B'=1$ and $(3B'+F)\cdot F=1$.
Then, $3B'+F$ is big because $(3B'+F)^2=4$.
We also note that for any irreducible curve $C_0$ on $X$, we have $(3B'+F)\cdot C_0=0$ if and only if $C_0$ is either $B$ or $E_1$. 
This follows from the following calculations:
\begin{align*}
3B' + F 
&\sim B' + 2\varphi^*(B - p^*\mathfrak{e})+F \\
&\sim B' + 2(B + E_1 + E_2) - 2\varphi^*p^*\mathfrak{e} +F \\
&\sim (\varphi^*(B' + F) - 2\varphi^*p^*\mathfrak{e}) + 2B + E_1,
\end{align*}
where $B'+F$ is an ample divisor on $S$, and we have
\begin{align*}
(3B' + F )\cdot B=0, \\
(3B' + F )\cdot E_1=0, \\
(3B' + F )\cdot E_2=1.
\end{align*}

The divisor $B+F$ on $S$ is ample and $p^*\mathfrak{e}$ is numerically trivial.
We have
\begin{align*}
    &\varphi^*(B+F)=B+F+2E_1+3E_2,\\
    &\varphi^*p^*\mathfrak{e}=\varphi^*(B-B')=B-B'+E_1+E_2.
\end{align*}

Since $K_X+B+B'-E_2\sim 0$, we have
\begin{align*}
3B'+F-K_X 
&\sim 3B'+F+B+B'-E_2\\
&\sim 4(B+E_1+E_2-\varphi^*p^*\mathfrak{e})+F+B-E_2\\
&\sim 5B+4E_1+3E_2+F-4\varphi^*p^*\mathfrak{e}\\
&\sim 5B+4E_1+3E_2+(\varphi^*(B+F)-B-2E_1-3E_2)-4\varphi^*p^*\mathfrak{e}\\
&\sim 4B+2E_1+\varphi^*(B+F-4p^*\mathfrak{e}).
\end{align*}

Since $\mathfrak{e} \in \mathrm{Pic}^0(B)$ and $B + F$ is ample, we see that $\varphi^*(B+F-4p^*\mathfrak{e})$ is nef and big.
By construction, we can easily check that 
$\mathrm{Nklt}(X, 4B + 2E_1) = \llcorner 4B + 2E_1 \lrcorner$.
Since $B'\cup F$ is disjoint from $B\cup E_1$, 
we have $(3B'+F)|_{\mathrm{Nklt}(X, 4B + 2E_1)}\sim 0$. In particular, $(3B'+F)|_{\mathrm{Nklt}(X, 4B + 2E_1)}$ is semi-ample.
By applying Theorem \ref{thm: basepoint-free} to $3B'+F$, 
we see that $3B'+F$ is semi-ample.
Then we can take a contraction morphism $\pi\colon X\rightarrow Y$ and $\pi$ contracts only the curves $B$ and $E_1$.
\end{proof}

\begin{lem}
The given surface $Y$ is not $\mathbb{Q}$-Gorenstein.
\end{lem}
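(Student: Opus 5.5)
The plan is to argue by contradiction, using the numerical pull-back of $K_Y$ together with the non-torsion class $\mathfrak{e}$. By Lemma \ref{lem: B and E_1}, $\pi\colon X\to Y$ contracts exactly $B\cup E_1$. Since $B\cdot E_1=1$ and $B\cup E_1$ is connected, it is contracted to a single point $y\in Y$, and $\pi$ is an isomorphism elsewhere. Suppose $mK_Y$ is Cartier for some $m>0$. Then $\pi^*(mK_Y)$ is a Cartier divisor on $X$ which agrees with $mK_X$ off $B\cup E_1$. So we can write $\pi^*(mK_Y)=m(K_X+aB+bE_1)$ for some rational $a,b$.

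First, determine $a,b$ numerically. The intersection numbers on $X$ are $B^2=-1$ (blow-up at a point of $B$; the second blow-up is at $E_1\cap F\notin B$), $E_1^2=-2$ and $B\cdot E_1=1$. By adjunction, since $B$ is elliptic, $K_X\cdot B=-B^2=1$; since $E_1$ is a $(-2)$-curve, $K_X\cdot E_1=0$. The conditions $(K_X+aB+bE_1)\cdot B=(K_X+aB+bE_1)\cdot E_1=0$ give $1-a+b=0$ and $a-2b=0$, so $a=2$ and $b=1$. Set $D:=K_X+2B+E_1$. Because the intersection matrix of $B,E_1$ is negative definite, these coefficients are forced. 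Hence $\pi^*(mK_Y)=mD$ as divisors.

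Next, identify $D$ up to linear equivalence using $K_X+B+B'-E_2\sim 0$ and $\varphi^*p^*\mathfrak{e}=\varphi^*(B-B')=B-B'+E_1+E_2$. This gives
$$
D\sim -B-B'+E_2+2B+E_1=B-B'+E_1+E_2\sim \varphi^*p^*\mathfrak{e}.
$$
Since $mK_Y$ is Cartier, $\mathcal{O}_Y(mK_Y)$ is trivial on a neighbourhood $U$ of $y$. Therefore $\mathcal{O}_X(mD)=\pi^*\mathcal{O}_Y(mK_Y)$ is trivial on $\pi^{-1}(U)\supset B$, and in particular $\mathcal{O}_X(mD)|_B\cong\mathcal{O}_B$. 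On the other hand, $p\circ\varphi|_B\colon B\to C$ is an isomorphism, so $\mathcal{O}_X(mD)|_B$ corresponds to $m\mathfrak{e}\in\mathrm{Pic}^0(C)$. This class is nonzero because $\mathfrak{e}$ is non-torsion, which is the desired contradiction. Hence $K_Y$ is not $\mathbb{Q}$-Cartier.

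The main point needing care is the step passing from numerical information to linear equivalence. One must check that $\pi^*(mK_Y)$ really equals $mD$ as a divisor, and not merely numerically. This holds because any two Cartier divisors agreeing off the exceptional locus and both numerically trivial on $B,E_1$ differ by an exceptional divisor numerically trivial on the exceptional curves, which vanishes by negative definiteness. One must then see that its restriction to $B$ is genuinely trivial in $\mathrm{Pic}(B)$, not just of degree $0$. This is where the non-torsion choice of $\mathfrak{e}$ enters, exactly as the hypothesis $k\neq\overline{\mathbb{F}}_p$ requires.
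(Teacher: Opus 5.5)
Your proposal is correct and follows essentially the same route as the paper. You assume $mK_Y$ is Cartier, pin down $\pi^*K_Y=K_X+2B+E_1$ from intersection numbers with $B$ and $E_1$, rewrite this as $\varphi^*p^*\mathfrak{e}$ using $K_X+B+B'-E_2\sim 0$, and restrict to $B\cong C$ to force $\mathfrak{e}$ to be torsion; the only differences are cosmetic (you fix the coefficients by adjunction up front and work with an honest Cartier multiple rather than $\mathbb{Q}$-linear restrictions to $B$ and $E_1$), which makes the triviality on $B$ slightly more transparent.
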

\begin{proof}
We may write
$$
K_X=\pi^*K_Y+aB+bE_1,
$$
where $\pi^*K_Y$ is the pull-back in the sense of Mumford.
Suppose to the contrary that $Y$ is $\mathbb{Q}$-Gorenstein.
Then we have $(K_X-aB-bE_1)|_B\sim_\mathbb{Q} 0$.
Since $K_X+B+B'-E_2\sim 0$ and $\varphi^*p^*\mathfrak{e}\sim\varphi^*(B-B')\sim B-B'+E_1+E_2$,
we have
\begin{align*}
0\sim_\mathbb{Q} (\pi^*K_Y)|_B &\sim_\mathbb{Q} (K_X-aB-bE_1)|_B\\
    &\sim (-B-B'+E_2-aB-bE_1)|_B\\
    &= ((B-B'+E_1+E_2)-(a+2)B-(b+1)E_1)|_B\\
    &\sim (\varphi^*p^*\mathfrak{e}-(a+2)B-(b+1)E_1)|_B.
\end{align*}
On the other hand, since $B\cong C$, we have
$$
0 \sim_\mathbb{Q} \mathfrak{e}+(-(a+2)B-(b+1)E_1)|_B.
$$
Since $B^2=-1$, $B\cdot E_1=1$ and $E_1^2=-2$, we have $a-b+1=0$.

We also have
\begin{align*}
0\sim_\mathbb{Q} (\pi^*K_Y)|_{E_1}
&\sim_\mathbb{Q} (K_X-aB-bE_1)|_{E_1}\\
    &\sim (-B-B'+E_2-aB-bE_1)|_{E_1}\\
    &= ((B-B'+E_1+E_2)-(a+2)B-(b+1)E_1)|_{E_1}\\
    &\sim (\varphi^*p^*\mathfrak{e}-(a+2)B-(b+1)E_1)|_{E_1}.
\end{align*}
Thus, $-a+2b=0$.
We then have $a=-2$ and $b=-1$.
This means that
$$
0 \sim_\mathbb{Q} \mathfrak{e}+(-(a+2)B-(b+1)E_1)|_B =\mathfrak{e}.
$$
This contradicts the fact that $\mathfrak{e}$ is non-torsion.
Therefore, $K_Y$ is not $\mathbb{Q}$-Cartier. We also have
\begin{align}\label{a=-2, b=-1}
\pi^*K_Y=K_X+2B+E_1,
\end{align}
where $\pi^*K_Y$ is the pull-back in the sense of Mumford.
\end{proof}

We take a blow-up $X_3$ of $X$ at $B \cap F'$ and let $E_3$ be the exceptional divisor.

\begin{tikzpicture}
\draw (-3, -1) node{$X:=X_2$};
\draw (0.3, -0.5) .. controls (1, -1) and (2, -1) .. (2.7, -0.5);
\draw (1.5, -0.5) node{$B$};
\draw (2.3, -0.5) .. controls (3, -1) and (4, -1) .. (4.7, -0.5);
\draw (3.5, -0.5) node{$E_1$};
\draw (4.3, -0.5) .. controls (5, -1) and (6, -1) .. (6.7, -0.5);
\draw (5.5, -0.5) node{$E_2$};

\draw (0.5, -3) -- (6.5, -3);
\draw (3.5, -3.3) node{$B'$};
\draw (1, -0.5) -- (1, -3.5);
\draw (0.7, -2) node{$F'$};
\draw (6, -0.5) -- (6, -3.5);
\draw (6.3, -2) node{$F$};
\filldraw (1, -0.85) circle (2pt);

\draw[->, line width=1pt] (2, -5) -- (2, -4);
\draw (2, -4.5) node[anchor=west]{blow-up at $B\cap F'$};

\draw (-3, -6) node{$X_3$};
\draw (-1.7, -5.5) .. controls (-1, -6) and (0, -6) .. (0.7, -5.5);
\draw (-0.5, -5.5) node{$E_3$};
\draw (0.3, -5.5) .. controls (1, -6) and (2, -6) .. (2.7, -5.5);
\draw (1.5, -5.5) node{$B$};
\draw (2.3, -5.5) .. controls (3, -6) and (4, -6) .. (4.7, -5.5);
\draw (3.5, -5.5) node{$E_1$};
\draw (4.3, -5.5) .. controls (5, -6) and (6, -6) .. (6.7, -5.5);
\draw (5.5, -5.5) node{$E_2$};

\draw (-1.5, -8) -- (6.5, -8);
\draw (2.5, -8.3) node{$B'$};
\draw (-1, -5.5) -- (-1, -8.5);
\draw (-1.3, -7) node{$F'$};
\draw (6, -5.5) -- (6, -8.5);
\draw (6.3, -7) node{$F$};
\end{tikzpicture}

The pull-back of $K_S$ is $K_{X_3}-E_1-2E_2-E_3$, the pull-back of $B$ is $B+E_1+E_2+E_3$, and the pull-back of $B'$ is $B'$.

We take a blow-up $X_4$ of $X_3$ at $E_3\cap F'$ and let $E_4$ be the exceptional divisor.

\begin{tikzpicture}
\draw (-5, -1) node{$X_3$};
\draw (-1.7, -0.5) .. controls (-1, -1) and (0, -1) .. (0.7, -0.5);
\draw (-0.5, -0.5) node{$E_3$};
\draw (0.3, -0.5) .. controls (1, -1) and (2, -1) .. (2.7, -0.5);
\draw (1.5, -0.5) node{$B$};
\draw (2.3, -0.5) .. controls (3, -1) and (4, -1) .. (4.7, -0.5);
\draw (3.5, -0.5) node{$E_1$};
\draw (4.3, -0.5) .. controls (5, -1) and (6, -1) .. (6.7, -0.5);
\draw (5.5, -0.5) node{$E_2$};

\draw (-1.5, -3) -- (6.5, -3);
\draw (2.5, -3.3) node{$B'$};
\draw (-1, -0.5) -- (-1, -3.5);
\draw (-1.3, -2) node{$F'$};
\draw (6, -0.5) -- (6, -3.5);
\draw (6.3, -2) node{$F$};
\filldraw (-1, -0.85) circle (2pt);

\draw[->, line width=1pt] (2, -5) -- (2, -4);
\draw (2, -4.5) node[anchor=west]{blow-up at $E_3\cap F'$};

\draw (-5, -6) node{$X_4$};
\draw (-3.7, -5.5) .. controls (-3, -6) and (-2, -6) .. (-1.3, -5.5);
\draw (-2.5, -5.5) node{$E_4$};
\draw (-1.7, -5.5) .. controls (-1, -6) and (0, -6) .. (0.7, -5.5);
\draw (-0.5, -5.5) node{$E_3$};
\draw (0.3, -5.5) .. controls (1, -6) and (2, -6) .. (2.7, -5.5);
\draw (1.5, -5.5) node{$B$};
\draw (2.3, -5.5) .. controls (3, -6) and (4, -6) .. (4.7, -5.5);
\draw (3.5, -5.5) node{$E_1$};
\draw (4.3, -5.5) .. controls (5, -6) and (6, -6) .. (6.7, -5.5);
\draw (5.5, -5.5) node{$E_2$};

\draw (-3.5, -8) -- (6.5, -8);
\draw (1.5, -8.3) node{$B'$};
\draw (-3, -5.5) -- (-3, -8.5);
\draw (-3.3, -7) node{$F'$};
\draw (6, -5.5) -- (6, -8.5);
\draw (6.3, -7) node{$F$};
\end{tikzpicture}

The pull-back of $K_S$ is $K_{X_4}-E_1-2E_2-E_3-2E_4$, the pull-back of $B$ is $B+E_1+E_2+E_3+E_4$, and the pull-back of $B'$ is $B'$.

We take a blow-up $\tilde{X}:=X_5$ of $X_4$ at $E_4\cap F'$ and let $E_5$ be the exceptional divisor.

\begin{tikzpicture}[scale=0.8]
\draw (-7, -1) node{$X_4$};
\draw (-3.7, -0.5) .. controls (-3, -1) and (-2, -1) .. (-1.3, -0.5);
\draw (-2.5, -0.5) node{$E_4$};
\draw (-1.7, -0.5) .. controls (-1, -1) and (0, -1) .. (0.7, -0.5);
\draw (-0.5, -0.5) node{$E_3$};
\draw (0.3, -0.5) .. controls (1, -1) and (2, -1) .. (2.7, -0.5);
\draw (1.5, -0.5) node{$B$};
\draw (2.3, -0.5) .. controls (3, -1) and (4, -1) .. (4.7, -0.5);
\draw (3.5, -0.5) node{$E_1$};
\draw (4.3, -0.5) .. controls (5, -1) and (6, -1) .. (6.7, -0.5);
\draw (5.5, -0.5) node{$E_2$};

\draw (-3.5, -3) -- (6.5, -3);
\draw (1.5, -3.3) node{$B'$};
\draw (-3, -0.5) -- (-3, -3.5);
\draw (-3.3, -2) node{$F'$};
\draw (6, -0.5) -- (6, -3.5);
\draw (6.3, -2) node{$F$};
\filldraw (-3, -0.85) circle (2pt);

\draw[->, line width=1pt] (2, -5) -- (2, -4);
\draw (2, -4.5) node[anchor=west]{blow-up at $E_4\cap F'$};

\draw (-7, -6) node{$\tilde{X}:=X_5$};
\draw (-5.7, -5.5) .. controls (-5, -6) and (-4, -6) .. (-3.3, -5.5);
\draw (-4.5, -5.5) node{$E_5$};
\draw (-3.7, -5.5) .. controls (-3, -6) and (-2, -6) .. (-1.3, -5.5);
\draw (-2.5, -5.5) node{$E_4$};
\draw (-1.7, -5.5) .. controls (-1, -6) and (0, -6) .. (0.7, -5.5);
\draw (-0.5, -5.5) node{$E_3$};
\draw (0.3, -5.5) .. controls (1, -6) and (2, -6) .. (2.7, -5.5);
\draw (1.5, -5.5) node{$B$};
\draw (2.3, -5.5) .. controls (3, -6) and (4, -6) .. (4.7, -5.5);
\draw (3.5, -5.5) node{$E_1$};
\draw (4.3, -5.5) .. controls (5, -6) and (6, -6) .. (6.7, -5.5);
\draw (5.5, -5.5) node{$E_2$};

\draw (-5.5, -8) -- (6.5, -8);
\draw (0.5, -8.3) node{$B'$};
\draw (-5, -5.5) -- (-5, -8.5);
\draw (-5.3, -7) node{$F'$};
\draw (6, -5.5) -- (6, -8.5);
\draw (6.3, -7) node{$F$};
\end{tikzpicture}

Then we get a projective birational morphisms $\tilde{\varphi}: \tilde{X} \to S$ and $\mu\colon\tilde{X}\rightarrow X$.
We have $\tilde{\varphi}^*K_S=K_{\tilde{X}}-E_1-2E_2-E_3-2E_4-3E_5$, $\tilde{\varphi}^*B=B+E_1+E_2+E_3+E_4+E_5$, and $\tilde{\varphi}^*B'=B'$.
Since $K_S+B+B'\sim 0$, we have $K_{\tilde{X}}+B+B'-E_2-E_4-2E_5\sim 0$.

\begin{lem}
The divisor $ 6B'+2F+F'$ on $\tilde{X}$ is semi-ample and we can take a contraction morphism $\tilde{\pi}\colon \tilde{X}\rightarrow \tilde{Y}$. The contraction $\tilde{\pi}$ contracts only the curves $B$, $E_1$, $E_3$, and $E_4$.
\end{lem}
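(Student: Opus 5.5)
The proof will follow the proof of Lemma \ref{lem: B and E_1}. Set $D:=6B'+2F+F'$ on $\tilde{X}$. I first compute self-intersections on $\tilde{X}$. The fibre $F$ was blown up twice and $F'$ three times, so $F^2=-2$ and $F'^2=-3$. Also $B'^2=0$ and $B^2=-2$, since $B$ was blown up at $B\cap F$ and $B\cap F'$. The exceptional curves satisfy $E_1^2=E_3^2=E_4^2=-2$ and $E_2^2=E_5^2=-1$.

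\textbf{Nef and big.} Since $F$ meets only $B'$ and $E_2$, and $F'$ meets only $B'$ and $E_5$, we get
$$
D\cdot B'=3,\quad D\cdot F=2,\quad D\cdot F'=3,\quad D\cdot E_2=2,\quad D\cdot E_5=1,
$$
$$
D\cdot B=D\cdot E_1=D\cdot E_3=D\cdot E_4=0 .
$$
As $D$ is effective and nonnegative on each component of its support, $D$ is nef. Moreover $D^2=6\cdot 3+2\cdot 2+3=25>0$, so $D$ is big.

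\textbf{$D$-trivial curves.} I want $D\cdot C_0=0$ exactly for $C_0\in\{B,E_1,E_3,E_4\}$. For a curve $C_0$ outside $\mathrm{Supp}(D)$, $D\cdot C_0=0$ means $C_0$ is disjoint from $B'\cup F\cup F'$. A vertical curve lying over a point of $C$ other than $p(F),p(F')$ is a fibre, and it meets $B'$. So the remaining candidates are the exceptional curves and horizontal curves. For a horizontal curve I plan to use, as in Lemma \ref{lem: B and E_1}, a linear equivalence
$$
D\sim \tilde{\varphi}^*(H)+(\text{effective divisor supported on } B\cup E_1\cup E_3\cup E_4),
$$
with $H$ ample on $S$. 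I will obtain it by substituting $B'\sim \tilde{\varphi}^*B-\tilde{\varphi}^*p^*\mathfrak{e}$ together with the formulas for $\tilde{\varphi}^*B$, $\tilde{\varphi}^*F=F+E_1+2E_2$ and $\tilde{\varphi}^*F'=F'+E_3+2E_4+3E_5$, and then tuning the coefficients so that $E_2$ and $E_5$ do not appear. If this bookkeeping turns out to be awkward, I will argue directly instead. A horizontal curve $C_0\neq B$ meets the total transform of every fibre. Over $p(F)$ and $p(F')$, avoiding $F$ and $F'$ forces it through the chains $E_1\cup E_2$ and $E_3\cup E_4\cup E_5$. Since $C_0\cdot B=0$ on $S$ would fail, I would compare the intersection with $B'$ and $B$ on $S$ to rule this out.

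\textbf{Semi-ampleness.} Next I compute $D-K_{\tilde{X}}$ using $K_{\tilde{X}}\sim -B-B'+E_2+E_4+2E_5$. The target is
$$
D-K_{\tilde{X}}\sim \Delta+\tilde{\varphi}^*(\text{ample}),
$$
where $\Delta$ is an effective divisor supported on $B\cup E_1\cup E_3\cup E_4$. This is the analogue of $4B+2E_1$ in the earlier lemma, and I expect coefficients of the shape $\Delta=aB+bE_1+cE_3+dE_4$. Then $\mathrm{Nklt}(\tilde{X},\Delta)$ is supported on $B\cup E_1\cup E_3\cup E_4$, which is disjoint from $\mathrm{Supp}(D)=B'\cup F\cup F'$. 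Hence $D|_{\mathrm{Nklt}}\sim 0$, and Theorem \ref{thm: basepoint-free} (with $a=1$) gives that $D$ is semi-ample. The induced morphism $\tilde{\pi}$ then contracts exactly the $D$-trivial curves $B,E_1,E_3,E_4$.

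\textbf{Main obstacle.} The main obstacle is the coefficient bookkeeping. I need a single representation making $D-K_{\tilde{X}}$ equal to the pullback of a nef and big (ideally ample) class plus an effective divisor supported only on the contracted curves, with no $E_2$ or $E_5$ appearing. I will try to balance the number of copies of $\tilde{\varphi}^*B$ against the fibre pullbacks, as was done with $\varphi^*(B+F)$ before, and absorb any leftover $p^*\mathfrak{e}$ into the ample class since it is numerically trivial.
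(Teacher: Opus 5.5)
Your nef/big computation and your semi-ampleness step match the paper. The real difference is in identifying the $D$-trivial curves. There your first-choice plan is impossible, so the argument only goes through via your fallback.

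\textbf{Why the first-choice decomposition fails.} Suppose $D\equiv\tilde{\varphi}^*H+\beta B+\epsilon_1E_1+\epsilon_3E_3+\epsilon_4E_4$ for some $\mathbb{Q}$-divisor $H$ on $S$ and some rational coefficients. Since $\tilde{\varphi}^*H\cdot E_i=0$ for all $i$:
\begin{itemize}
\item[(i)] intersecting with $E_2$ and $E_5$ gives $\epsilon_1=D\cdot E_2=2$ and $\epsilon_4=D\cdot E_5=1$;
\item[(ii)] then $D\cdot E_1=0$ and $D\cdot E_4=0$ force $\beta=4$ and $\epsilon_3=2$;
\item[(iii)] hence $D\cdot E_3=\beta-2\epsilon_3+\epsilon_4=1\neq 0$, a contradiction.
\end{itemize}
On $X$ only one uncontracted exceptional curve, $E_2$, imposes a condition. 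That is why the analogous decomposition of $3B'+F$ exists in Lemma~\ref{lem: B and E_1} but not here.

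\textbf{Your fallback is sound.} Stated cleanly: for $C_0\not\subset\mathrm{Supp}(D)$, $D\cdot C_0=0$ iff $C_0$ misses $B'\cup F\cup F'$. The components of the two special fibres are checked by hand. If $C_0\neq B,B'$ is horizontal and misses $B'$, its image $\Gamma\subset S$ satisfies $\Gamma\cdot B'=0$, because $\tilde{\varphi}$ is an isomorphism near $B'$. Hence $\Gamma\cdot B=0$, since $B\equiv B'$. So $\Gamma\cap B=\emptyset$, and therefore $C_0$ meets the strict transform of $F$.

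\textbf{Comparison with the paper.} The paper writes $D=2\mu^*(3B'+F)+F'$. Nefness of $3B'+F$ and Lemma~\ref{lem: B and E_1} then cut the candidates down to $B,E_1,E_3,E_4,E_5,F'$, after which it checks intersection numbers. That is shorter because it recycles the earlier lemma, where the decomposition trick does work. Your route is self-contained: it uses only $B\equiv B'$ on $S$ and the configuration of the two special fibres.

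\textbf{The semi-ampleness decomposition exists.} Here the decomposition you want is available, and you should write it down rather than list it as the main obstacle. Apply the same test to $D-K_{\tilde{X}}$. Its intersections with $E_2,E_5$ are $3,2$, and with $E_1,E_3,E_4$ they are $0$. This forces $\Delta=6B+3E_1+4E_3+2E_4$, and now the $E_3$ condition $6-8+2=0$ is consistent. Indeed
\[
D-K_{\tilde{X}}\sim 6B+3E_1+4E_3+2E_4+\tilde{\varphi}^*(2B+2F+F'-7p^*\mathfrak{e}),
\]
which is exactly the paper's identity. Theorem~\ref{thm: basepoint-free} with $a=1$ then applies as you describe.
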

\begin{proof}
We note that $B'^2=0$, $F^2=-2$ and $F'^2=-3$.
The divisor $6B'+2F+F'$ on $\tilde{X}$ is nef because $(6B'+2F+F')\cdot B'=3$, $(6B'+2F+F')\cdot F=2$ and $(6B'+2F+F')\cdot F'=3$.
Then, $6B'+2F+F'$ is big because $(6B'+2F+F')^2=25$.
We also note that for any irreducible curve $C_0$ on $X$, we have $(6B'+2F+F')\cdot C_0=0$ if and only if $C_0$ is either $B$, $E_1$, $E_3$ or $E_4$. 
This follows from the following calculations:
\begin{align*}
6B'+2F+F'=2\mu^*(3B'+F)+F',
\end{align*}
which implies by the proof of Lemma \ref{lem: B and E_1} that candidates of the $\tilde{\pi}$-exceptional prime divisors are $B$, $E_1$, $E_3$, $E_4$, $E_5$, and $F'$. We also have
\begin{align*}
(6B'+2F+F')\cdot B=0,\\
(6B'+2F+F')\cdot E_1=0,\\
(6B'+2F+F')\cdot E_3=0,\\
(6B'+2F+F')\cdot E_4=0,\\
(6B'+2F+F')\cdot E_5=1,\\
(6B'+2F+F')\cdot F'=3
\end{align*}

The divisor $2B+2F+F'$ on $S$ is ample and $p^*\mathfrak{e}$ is numerically trivial.
We have
\begin{align*}
    &\tilde{\varphi}^*(2B+2F+F')\\
    &=2(B+E_1+E_2+E_3+E_4+E_5)+2(F+E_1+2E_2)+(F'+E_3+2E_4+3E_5)\\
    &=2B+4E_1+6E_2+3E_3+4E_4+5E_5+2F+F',\\
    &\tilde{\varphi}^*p^*\mathfrak{e}=\tilde{\varphi}^*(B-B')=B-B'+E_1+E_2+E_3+E_4+E_5.
\end{align*}
Since $K_{\tilde{X}}+B+B'-E_2-E_4-2E_5\sim 0$, we have
\begin{align*}
&6B'+2F+F'-K_{\tilde{X}}\\
&\sim 6B'+2F+F'+B+B'-E_2-E_4-2E_5\\
&\sim 7(B+E_1+E_2+E_3+E_4+E_5-\tilde{\varphi}^*p^*\mathfrak{e})+2F+F'+B-E_2-E_4-2E_5\\
&\sim 8B+7E_1+6E_2+7E_3+6E_4+5E_5+2F+F'-7\tilde{\varphi}^*p^*\mathfrak{e}\\
&\sim 8B+7E_1+6E_2+7E_3+6E_4+5E_5\\
&+(\tilde{\varphi}^*(2B+2F+F')-(2B+4E_1+6E_2+3E_3+4E_4+5E_5))-7\tilde{\varphi}^*p^*\mathfrak{e}\\
&\sim 6B+3E_1+4E_3+2E_4+\tilde{\varphi}^*(2B+2F+F'-7p^*\mathfrak{e}).
\end{align*}
Since $\mathfrak{e} \in \mathrm{Pic}^0(B)$ and $2B+2F+F'$ is ample, we see that $\tilde{\varphi}^*(2B+2F+F'-7p^*\mathfrak{e})$ is nef and big.
By construction, we can easily check that 
$\mathrm{Nklt}(\tilde{X}, 6B+3E_1+4E_3+2E_4) = \llcorner 6B+3E_1+4E_3+2E_4 \lrcorner$.
Since $B'\cup F\cup F'$ is disjoint from $B\cup E_1\cup E_3\cup E_4$, we have
$(6B'+2F+F')|_{\mathrm{Nklt}(\tilde{X}, 6B+3E_1+4E_3+2E_4)}\sim 0$. In particular, $(6B'+2F+F')|_{\mathrm{Nklt}(\tilde{X}, 6B+3E_1+4E_3+2E_4)}$ is semi-ample.
By applying Theorem \ref{thm: basepoint-free} to $6B'+2F+F'$,
we see that $6B'+2F+F'$ is semi-ample.
Then we can take a contraction morphism $\tilde{\pi}\colon \tilde{X}\rightarrow \tilde{Y}$ and $\tilde{\pi}$ contracts only the curves $B$, $E_1$, $E_3$ and $E_4$.
\end{proof}

Although the choice of the two distinct fibers $F$ and $F'$ was arbitrary, we will show that $\tilde{Y}$ can be made $\mathbb{Q}$-Gorenstein by choosing them appropriately.
\begin{lem}
The fibers $F$ and $F'$ on $S$ can be chosen such that the resulting surface $\tilde{Y}$ is $\mathbb{Q}$-Gorenstein.
\end{lem}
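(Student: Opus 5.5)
The plan is to translate $\mathbb{Q}$-Gorensteinness of $\tilde{Y}$ into a condition on the restriction of the Mumford pull-back of $K_{\tilde{Y}}$ to the exceptional curve $B$. The only non-rational exceptional curve is $B\cong C$, and that restriction will depend on the points $P:=B\cap F$ and $Q:=B\cap F'$ of $C$. We then choose $F'$, given $F$, so that the resulting class in $\mathrm{Pic}^0(C)$ is torsion.

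\textbf{Step 1: Mumford pull-back.} The exceptional locus of $\tilde{\pi}$ is the connected chain $E_4 - E_3 - B - E_1$, contracted to one point. Write
$$
K_{\tilde{X}}=\tilde{\pi}^*K_{\tilde{Y}}+aB+bE_1+cE_3+dE_4
$$
in the sense of Mumford. The conditions $\tilde{\pi}^*K_{\tilde{Y}}\cdot E_1=\tilde{\pi}^*K_{\tilde{Y}}\cdot E_3=\tilde{\pi}^*K_{\tilde{Y}}\cdot E_4=\tilde{\pi}^*K_{\tilde{Y}}\cdot B=0$ form a linear system. It has a unique solution because the intersection matrix is negative definite. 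The self-intersections are $B^2=-2$, $E_1^2=-2$, $E_3^2=-2$, $E_4^2=-2$ on $\tilde{X}$.

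\textbf{Step 2: restriction to $B$.} Using $K_{\tilde{X}}\sim -B-B'+E_2+E_4+2E_5$ and $\tilde{\varphi}^*p^*\mathfrak{e}\sim B-B'+E_1+\dots+E_5$, we compute $(\tilde{\pi}^*K_{\tilde{Y}})|_B$ in $\mathrm{Pic}(B)=\mathrm{Pic}(C)$, exactly as in the proof that $Y$ is not $\mathbb{Q}$-Gorenstein. The curves $E_2,E_4,E_5,B'$ are disjoint from $B$, while $E_1|_B=P$ and $E_3|_B=Q$. Moreover $B|_B$ equals $\mathcal{O}_S(B)|_B(-P-Q)$, and $\mathcal{O}_S(B)|_B$ is $\pm\mathfrak{e}$. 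Hence the restriction has the form
$$
n\mathfrak{e}+rP+sQ, \qquad n,r,s\in\mathbb{Q},
$$
of degree $r+s=0$ by Step 1. So it equals $n\mathfrak{e}+r(P-Q)$.

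The main computational check is that $r\neq0$. The coefficients differ on the two sides: $F$ was blown up twice and $F'$ three times. This asymmetry is exactly what should make $r\neq 0$, whereas in the earlier one-sided example $n\mathfrak{e}$ alone survived.

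\textbf{Step 3: choice of $F'$.} Fix $F$, hence $P$. Since $\mathrm{Pic}^0(C)\cong C$ is divisible, we choose $Q$ with $r(P-Q)\sim_{\mathbb{Q}}-n\mathfrak{e}$; after clearing denominators this is an actual equation in $\mathrm{Pic}^0(C)$. We may take $Q\neq P$: if $Q=P$ were forced, then $n\mathfrak{e}$ would be torsion. If $n\neq0$, this already contradicts the choice of $\mathfrak{e}$; if $n=0$, we simply take $Q-P$ to be a nonzero torsion point. Set $F'=p^{-1}(p(Q))$. Then $m\tilde{\pi}^*K_{\tilde{Y}}$ is an integral Cartier divisor on $\tilde{X}$ for suitable $m$. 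Its restriction to $B$ is trivial, and it has degree $0$ on the rational curves $E_1,E_3,E_4$, so it is trivial on them as well.

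\textbf{Step 4: descent.} This step, passing from triviality on the exceptional curves to Cartierness downstairs, is the main obstacle; I would handle it with Theorem \ref{thm: basepoint-free}, as in the previous lemmas. Let $H:=6B'+2F+F'$ be the semi-ample divisor defining $\tilde{\pi}$, and set
$$
D:=m\tilde{\pi}^*K_{\tilde{Y}}+NH, \qquad N\gg0.
$$
Then $D$ is Cartier, numerically trivial exactly on $E_4,E_3,B,E_1$ and positive elsewhere, hence nef and big. Take $\Delta$ as in the previous lemma, namely $\Delta=6B+3E_1+4E_3+2E_4$ (rescaled if needed), so that $\mathrm{Nklt}(\tilde{X},\Delta)$ is supported on the exceptional chain. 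By Step 3 the restriction $D|_{\mathrm{Nklt}}$ is trivial, hence semi-ample. For suitable $a$ and large $N$, the divisor $aD-(K_{\tilde{X}}+\Delta)$ is nef and big, since $H-(K_{\tilde{X}}+\Delta)$ is and $\tilde{\pi}^*K_{\tilde{Y}}$ is $\tilde{\pi}$-trivial. Theorem \ref{thm: basepoint-free} then shows that $D$ is semi-ample.

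Its Stein factorization contracts exactly the curves on which $D$ has degree $0$, so it coincides with $\tilde{\pi}$. Therefore $D$ is the pull-back of a Cartier divisor on $\tilde{Y}$, and $\tilde{\pi}_*D=mK_{\tilde{Y}}+N\tilde{\pi}_*H$ is Cartier. Since $H$ is the pull-back of a Cartier divisor on $\tilde{Y}$, it follows that $mK_{\tilde{Y}}$ is Cartier, so $\tilde{Y}$ is $\mathbb{Q}$-Gorenstein.
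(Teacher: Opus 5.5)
\textbf{Steps 1--3.} These are essentially the paper's computation, but you should carry out the check you postpone rather than appeal to the asymmetry of the blow-ups. The linear system gives $a=-\tfrac{12}{5}$, $b=-\tfrac65$, $c=-\tfrac85$, $d=-\tfrac45$. Since $K_{\tilde X}|_B\sim-\mathfrak e+P+Q$ and $B|_B\sim\mathfrak e-P-Q$, the restriction is
\[
-(1+a)\mathfrak e+(1+a-b)P+(1+a-c)Q=\tfrac15\bigl(7\mathfrak e-(P-Q)\bigr).
\]
So $n=\tfrac75$ and $r=-\tfrac15\neq0$, and your condition becomes exactly the paper's condition $P-Q\sim_{\mathbb Q}7\mathfrak e$.

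\textbf{The gap is in Step 4.} It sits at the sentence ``By Step 3 the restriction $D|_{\mathrm{Nklt}}$ is trivial.'' In Theorem \ref{thm: basepoint-free} the non-klt locus carries the scheme structure given by $\mathcal J_\Delta$. For $\Delta=6B+3E_1+4E_3+2E_4$ this is the non-reduced curve $6B+3E_1+4E_3+2E_4$.

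Rescaling $\Delta$ cannot avoid this. Since $D$ is $\tilde\pi$-trivial, nefness of $aD-(K_{\tilde X}+\Delta)$ forces $-(K_{\tilde X}+\Delta)$ to be $\tilde\pi$-nef. The negativity lemma then forces the exceptional part of $\Delta$ to be at least $\tfrac15(12B+6E_1+8E_3+4E_4)$, so $B$ occurs in $\mathrm{Nklt}$ with multiplicity at least $2$.

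Step 3 only gives triviality on each reduced component, hence on the reduced chain, since its dual graph is a tree. In characteristic $0$, a line bundle trivial on $Z_{\mathrm{red}}$ need not be trivial, or even torsion, on a thickening $Z$. A non-torsion numerically trivial bundle on $Z$ has no nowhere-vanishing section, so it is not semi-ample. You therefore need an extra argument for triviality on the thickened scheme.

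One possible repair is to pass through square-zero steps $Z_{k+1}=Z_k+C$ with $H^1(C,\mathcal O_C(-Z_k))=0$. Such a sequence from $E_1+B+E_3+E_4$ up to $\Delta$ does exist here. Its first step uses that $\mathcal O_B(-(E_1+B+E_3+E_4))\cong\mathcal O_B(-\mathfrak e)$ is a nontrivial degree-zero bundle. In characteristic $p$ the issue disappears, because the kernel of $\mathrm{Pic}(Z)\to\mathrm{Pic}(Z_{\mathrm{red}})$ is $p$-power torsion.

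\textbf{How the paper avoids this.} The paper does your Step 2 on a Zariski neighbourhood instead of in $\mathrm{Pic}(B)$. On $U=\tilde X\setminus(B'\cup F\cup F')$ the sheaves $\mathcal O(B')$, $\mathcal O(F)$, $\mathcal O(F')$ are trivial. Hence on $U$:
\begin{itemize}
\item $\tilde\varphi^*B\sim\tilde\varphi^*p^*\mathfrak e$;
\item $K_{\tilde X}\sim-\tilde\varphi^*p^*\mathfrak e+E_1+2E_2+E_3+2E_4+3E_5$;
\item $\tilde\varphi^*p^*(P-Q)\sim E_1+2E_2-E_3-2E_4-3E_5$.
\end{itemize}
Together these give $5\tilde\pi^*K_{\tilde Y}|_U\sim\tilde\varphi^*p^*\bigl(7\mathfrak e-(P-Q)\bigr)|_U$.

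If $7\mathfrak e-(P-Q)$ is $N$-torsion, then $5N\tilde\pi^*K_{\tilde Y}$ is principal on $U$. Since $U$ contains $\tilde\pi^{-1}(V)$ for some neighbourhood $V$ of the singular point, pushing forward shows $5NK_{\tilde Y}$ is principal on $V$. This needs neither the basepoint-free theorem nor any characteristic assumption.
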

\begin{proof}
We first calculate the discrepancies of the exceptional divisors $B$, $E_1$, $E_3$ and $E_4$ for the morphism $\tilde{\pi}\colon\tilde{X}\rightarrow\tilde{Y}$.
We write
$$
K_{\tilde{X}}=\tilde{\pi}^*K_{\tilde{Y}}+aB+bE_1+cE_3+dE_4,
$$
where $\tilde{\pi}^*K_{\tilde{Y}}$ is the pull-back in the sense of Mumford.
By calculation, we find that $B^2=E_1^2=E_3^2=E_4^2=-2$ for these four divisors on $\tilde{X}$.
We have
\begin{align*}
0=(\tilde{\pi}^*K_{\tilde{Y}})\cdot B
&= (K_{\tilde{X}}-aB-bE_1-cE_3-dE_4) \cdot B\\
&=(-(a+1)B-B'-bE_1+E_2-cE_3-(d-1)E_4+2E_5)\cdot B.
\end{align*}
Then,
\begin{equation}\label{eq: 1}
0=2(a+1)-b-c. 
\end{equation}
We have
\begin{align*}
0= (-(a+1)B-B'-bE_1+E_2-cE_3-(d-1)E_4+2E_5)\cdot {E_1}.
\end{align*}
Then,
\begin{equation}\label{eq: 2}
0=-(a+1)+2b+1. 
\end{equation}
We have
\begin{align*}
0= (-(a+1)B-B'-bE_1+E_2-cE_3-(d-1)E_4+2E_5)\cdot {E_3}.
\end{align*}
Then,
\begin{equation}\label{eq: 3}
0=-(a+1)+2c-(d-1). 
\end{equation}
We have
\begin{align*}
0= (-(a+1)B-B'-bE_1+E_2-cE_3-(d-1)E_4+2E_5)\cdot {E_4}.
\end{align*}
Then,
\begin{equation}\label{eq: 4}
0=-c+2(d-1)+2. 
\end{equation}
By equations \ref{eq: 1}, \ref{eq: 2}, \ref{eq: 3} and \ref{eq: 4}, we get
$a=-\frac{12}{5}$, $b=-\frac{6}{5}$, $c=-\frac{8}{5}$ and $d=-\frac{4}{5}$.
Now, we have
$$
K_{\tilde{X}}=\tilde{\pi}^*K_{\tilde{Y}}-\frac{1}{5}(12B+6E_1+8E_3+4E_4),
$$
where $\tilde{\pi}^*K_{\tilde{Y}}$ is the pull-back in the sense of Mumford.
Let $F\cap B=:x\in S$ and $F'\cap B=:x'\in S$ be the points on the curve $B$ corresponding to the fibers $F$ and $F'$, respectively.
We have $\tilde{\varphi}^*F =F + E_1 + 2E_2$, $\tilde{\varphi}^*F' = F' + E_3 + 2E_4 + 3E_5$. 
We regard $\mathfrak{a} = x - x'$ as a divisor on $C$ by identifying $B$ with $C$. Then
\[
\tilde{\varphi}^*(p^*\mathfrak{a}) \sim E_1 + 2E_2 - E_3 - 2E_4 - 3E_5 \quad \text{near } B\cup E_1\cup E_3\cup E_4.
\]
Since $B'$ is disjoint from $B$, $\mathcal{O}_X(B')$ is trivial near $B$ and so 
$$
\tilde{\varphi}^*B \sim \tilde{\varphi}^*(p^*\mathfrak{e})
\quad \text{near } B\cup E_1\cup E_3\cup E_4.
$$
On the other hand,
\[
K_{\tilde{X}} \sim -\tilde{\varphi}^*(p^*\mathfrak{e}) + E_1 + 2E_2 + E_3 + 2E_4 + 3E_5
 \quad \text{near } B\cup E_1\cup E_3\cup E_4.
\]
Combining these together, we have
\begin{align*}
5(K_{\tilde{X}} + \frac{1}{5}(12B + 6E_1 + 8E_3+ 4E_4))
&\sim 5(-\tilde{\varphi}^*(p^*\mathfrak{e}) + E_1 + 2E_2 + E_3 + 2E_4 + 3E_5)\\
&+12(\tilde{\varphi}^*(p^*\mathfrak{e})-E_1-E_2-E_3-E_4-E_5)\\
&+6E_1 + 8E_3+ 4E_4\\
&=7\tilde{\varphi}^*(p^*\mathfrak{e})-E_1-2E_2+E_3+2E_4+3E_5\\
&\sim \tilde{\varphi}^*(p^*(7\mathfrak{e} - \mathfrak{a}))
\end{align*}
near $B\cup E_1\cup E_3\cup E_4$.
We deduce from this that $\tilde{Y}$ is $\mathbf{Q}$-Gorenstein if and only if $7\mathfrak{e} - \mathfrak{a}$ is a torsion element in $\mathrm{Pic}^0(B)$, where $\mathfrak{a} \sim x - x'$.
This condition holds if
\[
x - x' \sim_{\mathbb{Q}} 7\mathfrak{e} \quad \text{in} \quad \text{Pic}^0(B).
\]
Since we can choose the positions of the points $x$ and $x'$ on $B$ arbitrarily, there exist fibers $F$ and $F'$ satisfying this condition.
\end{proof}

Finally, we will demonstrate that this example shows that $\mathbb{Q}$-Gorensteinness is not necessarily preserved under the MMP.
The composition $\tilde{X}\rightarrow X\rightarrow Y$ contracts $B$, $E_1$, $E_3$, $E_4$ and $E_5$ and $\tilde{X}\rightarrow\tilde{Y}$ contracts $B$, $E_1$, $E_3$ and $E_4$. Then, there exists a contraction $\tilde{Y}\rightarrow Y$ which contracts $E_5$.

\begin{tikzpicture}[scale=0.8]
\draw[->] (3, -2.5) -- (8, -2.5);
\draw[->] (3, -4.5) -- (8, -4.5);

\draw (2.5, -2.5) node{$\tilde{X}$};
\draw[->] (2.5, -3) -- (2.5, -4);
\draw (2.5, -3.5) node[anchor=east]{$\tilde{\pi}$};

\draw (8.5, -2.5) node{$X$};
\draw[->] (8.5, -3) -- (8.5, -4);
\draw (8.5, -3.5) node[anchor=west]{$\pi$};

\draw (2.5, -4.5) node{$\tilde{Y}$};
\draw (0.3, -5.5) .. controls (1, -6) and (2, -6) .. (2.7, -5.5);
\draw (1.5, -5.5) node{$E_5$};
\draw (2.3, -5.5) .. controls (3, -6) and (4, -6) .. (4.7, -5.5);
\draw (3.5, -5.5) node{$E_2$};
\draw (0.5, -8) -- (4.5, -8);
\draw (2.5, -8.3) node{$B'$};
\draw (1, -5.5) -- (1, -8.5);
\draw (0.7, -7) node{$F'$};
\draw (4, -5.5) -- (4, -8.5);
\draw (4.3, -7) node{$F$};

\draw[->] (5, -7) -- (6, -7);

\draw (8.5, -4.5) node{$Y$};

\draw (6.3, -5.5) .. controls (7, -6) and (10, -6) .. (10.7, -5.5);
\draw (8.5, -5.5) node{$E_2$};
\draw (6.5, -8) -- (10.5, -8);
\draw (8.5, -8.3) node{$B'$};
\draw (7, -5.5) -- (7, -8.5);
\draw (6.7, -7) node{$F'$};
\draw (10, -5.5) -- (10, -8.5);
\draw (10.3, -7) node{$F$};
\end{tikzpicture}

In particular, we have $E_5^2<0$ for this divisor on $\tilde{Y}$.
We also have
\begin{align*}
K_{\tilde{Y}}\cdot E_5
&=(\tilde{\pi}^*K_{\tilde{Y}})\cdot E_5\\
&=(-B-B'+E_2+E_4+2E_5+\frac{1}{5}(12B+6E_1+8E_3+4E_4))\cdot E_5\\
&=\frac{9}{5}-2<0.
\end{align*}
Then, the contraction $\tilde{Y}\rightarrow Y$ is given by a $K_{\tilde{Y}}$-negative extremal ray of the Kleiman--Mori cone $\overline{\textnormal{NE}}(\tilde{Y})$.
This implies that $\mathbb{Q}$-Gorensteinness is not necessarily preserved in the process of going to the minimal model (cf.~ \cite[Remark 4.4]{Sak87}).

\section{Counterexamples to the finite generation} \label{sec: f.g.}
In this section, we construct an example of a normal projective $\mathbb{Q}$-Gorenstein surface with $\kappa=2$ whose canonical ring is not finitely generated. 
It is worth noting that the finite generation holds for Gorenstein surface (cf.~ Remark \ref{rem: f.g. for Gorenstein}), and for $\mathbb{Q}$-factorial surface (cf.~ \cite{Fuj12} and \cite{Tan14}).
The following proof of Lemma \ref{lem: kappa=1} and the subsequent construction were suggested to the author by Fujino.

\begin{lem}\label{lem: kappa=1}
Let $X$ be a normal projective surface over $k$ (of arbitrary characteristic) with $\kappa(X)\leq 1$. Then, $R(X)$ is finitely generated.
\end{lem}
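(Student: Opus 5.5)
We split into the cases $\kappa(X)=-\infty$, $0$, and $1$, and in each case prove that $R(X)$ is finitely generated. First we pass to a resolution. Let $f\colon Y\to X$ be the minimal resolution and write $f^*K_X=K_Y+\Delta$ in the sense of Mumford, with $\Delta\ge 0$. By the projection formula, $R(X)\cong R(Y,K_Y+\Delta)$, the latter being $\bigoplus_m H^0(Y,\llcorner m(K_Y+\Delta)\lrcorner)$. So we may work with a $\mathbb{Q}$-divisor $D:=K_Y+\Delta$ on a smooth projective surface. Everything below only uses the graded ring $R(Y,D)$ of a $\mathbb{Q}$-divisor $D$ with $\kappa(Y,D)\le 1$. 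The key point is that such rings are finitely generated for purely algebraic reasons.

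\textbf{Case $\kappa=-\infty$.} Here $R(X)=k$, so there is nothing to prove.

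\textbf{Case $\kappa=0$.} Pick $m_0>0$ with $H^0(mK_X)\neq0$ for some $m$. Since $\kappa=0$, every nonzero graded piece is one-dimensional. Indeed, if $H^0(mK_X)$ had two independent sections $s,t$, then $s^N,s^{N-1}t,\dots,t^N$ would be linearly independent in $H^0(NmK_X)$ for all $N$, because $s/t$ is transcendental. That would force $\mathrm{tr.deg}\ge 2$, a contradiction. The set $\{m: H^0(mK_X)\neq0\}$ is a numerical semigroup $\Gamma\subset\mathbb{Z}_{\ge0}$, and products of nonzero sections are nonzero. Hence $R(X)\cong k[\Gamma]$, which is finitely generated because any submonoid of $\mathbb{Z}_{\ge0}$ is finitely generated.

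\textbf{Case $\kappa=1$.} Replacing $D$ by a multiple, which is harmless since $R(X)$ is finite over its Veronese subring, we assume $|mD|$ is nonempty and induces the Iitaka fibration for $m$ divisible. Write $\llcorner mD\lrcorner=M_m+F_m$ with $M_m$ mobile. Taking a resolution of the base locus of $|M_m|$ (further blow-ups of $Y$, which do not change $R$), we get a morphism $g\colon Y'\to B$ onto a smooth projective curve $B$ with connected fibres. The image of the Iitaka map of $D$ is one-dimensional, and its normalization is $B$. Then every section of $\llcorner nD\lrcorner$ is pulled back from $B$. Concretely, for a general fibre $G$ we have $\kappa(G,D|_G)=0$. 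Moreover, each $\llcorner nD\lrcorner$ decomposes as $g^*L_n+N_n$, where $N_n$ is the fixed part and $L_n$ is a divisor on $B$ with $H^0(Y',\llcorner nD\lrcorner)=H^0(B,L_n)$. The main obstacle is to show that $n\mapsto L_n$ is controlled well enough. We write $D=g^*A+N$ with $A$ a $\mathbb{Q}$-divisor on $B$ of positive degree. Here $N$ is the $\mathbb{Q}$-divisor obtained by taking $g^*A$ as large as possible: for each fibre $g^*b$, the coefficient of $b$ in $A$ is the largest $c$ with $D-c\,g^*b$ still "effective-like" along that fibre, and these coefficients are rational. Then $H^0(\llcorner nD\lrcorner)=H^0(B,\llcorner nA\lrcorner)$ for all $n$. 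This identity requires checking that for a vertical component the round-down of $nN$ never contributes sections, which follows from the maximality of $A$ and Zariski's lemma on fibres. Once it holds, $R(X)^{(m)}\cong R(B,A)$ for suitable $m$, and the section ring of a $\mathbb{Q}$-divisor of positive degree on a curve is finitely generated. To see this, choose $d$ with $dA$ integral and $\deg dA\ge 2g+1$. Then $R(B,dA)$ is generated in degree one, and $R(B,A)$ is a finite module over it because $\llcorner nA\lrcorner$ for $n$ in each residue class mod $d$ differs by fixed divisors. Finally, $R(X)$ is integral over its Veronese subring and is contained in its fraction field (by normality of section rings), so $R(X)$ is finitely generated. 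In this last case, the step I expect to require the most care is the comparison $H^0(\llcorner nD\lrcorner)=H^0(B,\llcorner nA\lrcorner)$.
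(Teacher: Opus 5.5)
Your proof is correct. For $\kappa=1$ it has the same skeleton as the paper's: resolve, make the Iitaka fibration a morphism $g\colon Y'\to B$ onto a curve, identify a Veronese subring of $R(X)$ with a section ring on $B$, and return to $R(X)$ by integrality.

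The difference is in the key input. The paper simply quotes \cite[(1.12) Theorem, (i)]{Mor87}, which supplies $c$ with $\phi_*\mathcal{O}(mc(K_{\tilde X}+\tilde\Delta))\cong\mathcal{O}_Y(mM)$ for all $m$. You instead construct $A$ by stripping from $D$ the largest rational multiple of each fibre, and you verify $H^0(\llcorner nD\lrcorner)=H^0(B,\llcorner nA\lrcorner)$ by hand. That verification works equally over open subsets of $B$. So for $c$ with $cA$ integral it says $g_*\mathcal{O}_{Y'}(\llcorner ncD\lrcorner)\cong\mathcal{O}_B(ncA)$, which is exactly Mori's conclusion with $M=cA$; you are reproving the cited result in the surface case.

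Your route is self-contained and shows why a multiple $c$ is unavoidable: multiple and non-reduced fibres create the fractional coefficients of $A$. It also treats $\kappa=0$ explicitly via the monoid ring, a case the paper's proof passes over because it only discusses an Iitaka fibration onto a curve. The paper's route is shorter and rests on a result valid for fibrations over curves in any dimension. Your curve step can be shortened too: $cA$ has positive degree, hence is ample, and the Veronese argument you already use finishes.

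The crux should be stated precisely rather than via ``effective-like''. Here is one way to do it.

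Choose $h$ with $\llcorner mD\lrcorner+(h)=g^*L_m+F_m$ and replace $D$ by $mD+(h)$. This only passes to the $m$-th Veronese, and it gives the honest equality $D=g^*L_m+F_m+\{mD\}$, where $\{mD\}:=mD-\llcorner mD\lrcorner$.

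As $D\cdot G=0$, you get $0\le F_m\cdot G=\llcorner mD\lrcorner\cdot G=-\{mD\}\cdot G\le 0$. So everything except $g^*L_m$ is vertical. Subtracting fibres then yields $D=g^*A+N$ with $N\ge 0$ vertical, and over each $b\in B$ some component $C_b$ of $g^*b$ has coefficient $0$ in $N$.

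No Zariski lemma is needed after that. If $(f)+nD\ge 0$, then $f$ is regular on the generic fibre, where $D$ restricts to $0$. Hence $f=g^*\bar f$ with $\bar f\in K(B)$, because $g_*\mathcal{O}_{Y'}=\mathcal{O}_B$. Comparing coefficients along $C_b$ gives $\mathrm{mult}_b((\bar f)+nA)\ge 0$, i.e.\ $\bar f\in H^0(B,\llcorner nA\lrcorner)$.
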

\begin{proof}
Take a resolution $f\colon (X', \Delta)\rightarrow X$. Then we have $R(X)=R(X', K_{X'}+\Delta)$.
Let $\phi\colon \tilde{X}\rightarrow Y$ be the Iitaka fibration associated to $\mathcal{O}_{X'}(K_{X'}+\Delta)$ and let $\nu\colon\tilde{X}\rightarrow X'$ be the induced birational morphism. We can write 
$K_{\tilde{X}}+\tilde{\Delta}=\nu^*(K_{X'}+\Delta)$. We note that $Y$ is a smooth curve and $\phi$ is flat. By \cite[(1.12) Theorem, (i)]{Mor87}, there exsists $c\in \mathbb{Z}_{>0}$ such that $\phi_*\mathcal{O}_{\tilde{X}}(c(K_{\tilde{X}}+\tilde{\Delta}))\cong\mathcal{O}_Y(M)$ is an invertible sheaf and the natural map $\mathcal{O}_Y(mM)\rightarrow \phi_*\mathcal{O}_{\tilde{X}}(mc(K_{\tilde{X}}+\tilde{\Delta}))$ is an isomorphism for any $m\in\mathbb{Z}_{>0}$. Since $\kappa(Y, M)=\kappa(\tilde{X}, K_{\tilde{X}}+\tilde{\Delta})=1$, the divisor $M$ is ample. In particular, $R(Y, M)$ is finitely generated. This implies that $R(X)=R(X', K_{X'}+\Delta)=R(\tilde{X}, K_{\tilde{X}}+\tilde{\Delta})$ is finitely generated.
\end{proof}

\begin{rem}[The finite generation for Gorenstein surfaces]\label{rem: f.g. for Gorenstein}
For Gorenstein surfaces in any characteristic, the canonical ring is finitely generated (cf.~ \cite[Theorem 14.42]{Bad01} or \cite[Theorem 4.10.9]{Fuj17}). We include a short proof for the reader's convenience.
Let $X$ be a normal projective Gorenstein surface over $k$ (of arbitrary characteristic).
By Lemma \ref{lem: MMP for Gorenstein}, we may assume that $K_X$ is nef.
If $\kappa(X)\leq1$, it follows from Lemma \ref{lem: kappa=1}.
If $\kappa(X)=2$, it follows from Theorem \ref{thm: criterion}.
\end{rem}

\vspace{1em}
\noindent 
\textbf{Construction.}
We take the contraction $\tilde{Y}\rightarrow Y$ of the example in Section \ref{sec: MMP}.
We note that $K_Y$ is nef in the sense of Mumford because
\begin{align*}
\pi^*K_Y &= K_X+2B+E_1 \mathrm{\ (by\ the\ equation\ }\eqref{a=-2, b=-1})\\
    &\sim B-B'+E_1+E_2\\
    &\sim \varphi^*p^*\mathfrak{e}.
\end{align*}
Since $Y$ is projective, we can take a very ample divisor $H$ on $Y$. 
Take a general member $A\in|mH|$. Since $H$ is very ample, we may assume that $\mathrm{Supp}A$ is contained in the smooth locus of $X$. We consider a degree $m$ cyclic cover 
$$
g\colon Z=\text{Spec}_Y(\bigoplus_{l=0}^{m-1}\mathcal{O}_Y(-lH))\rightarrow Y
$$
ramified along $A$ (cf.~ \cite[Definition 2.50]{KM98}). Here, we set $m = 2$ in characteristic zero; in positive characteristic, we assume that $m$ is coprime to $\mathrm{char}(k)$.
We also take the fiber product $\tilde{Z}:=\tilde{Y}\times_Y Z$ and let $\tilde{g}\colon \tilde{Z}\rightarrow \tilde{Y}$ be the induced finite cover.
\begin{center}
\begin{tikzcd}
\widetilde{Z} := \widetilde{Y} \times_Y Z \arrow[r] \arrow[d, "\tilde{g}" '] & Z \arrow[d, "g"] \\
\widetilde{Y} \arrow[r] & Y
\end{tikzcd}   
\end{center}
Then we have
$$
K_Z\sim g^*(K_Y+\frac{m-1}{m}A)
$$
by the ramification formula.
Since $g$ is finite and $Y$ is not $\mathbb{Q}$-Gorenstein, $Z$ is not $\mathbb{Q}$-Gorenstein. On the other hand, we find that $\kappa(Z)=2$. Then $K_Z$ is nef in the sense of Mumford and big and the canonical ring $R(Z)$ is not finitely generated by Theorem \ref{thm: criterion}.
Then, the canonical ring $R(\tilde{Z})$ is not finitely generated.
We also note that $\kappa(\tilde{Z})=\kappa(Z)=2$ and $\tilde{Z}$ is $\mathbb{Q}$-Gorenstein because $\tilde{Y}$ is $\mathbb{Q}$-Gorenstein.
Therefore, this $\tilde{Z}$ is an example of normal projective $\mathbb{Q}$-Gorenstein surface with $\kappa(\tilde{Z)}=2$ whose canonical ring is not finitely generated.

\bibliographystyle{amsalpha} 
\bibliography{ref}

\end{document}